\newtheorem{theorem}{Theorem}
\newtheorem{corollary}[theorem]{Corollary}
\newtheorem{lemma}[theorem]{Lemma}
\theoremstyle{definition}
\newtheorem{definition}[theorem]{Definition}
\newtheorem{claim}[theorem]{Claim}
\crefname{therm}{Lemma}{Lemmas}
\crefname{claim}{Claim}{Claims}
\newcommand{\Oh}{\mathcal{O}}
\newcommand{\Z}{\mathbb{Z}_{\geq 0}}
\renewcommand{\leq}{\leqslant}
\renewcommand{\geq}{\geqslant}
\renewcommand{\epsilon}{\varepsilon}
\renewcommand{\setminus}{-}
\newcommand{\wei}{\mathfrak{w}}
\newcommand{\esd}{extended strip decomposition\xspace}
\title{Graphs with no long claws: An improved bound for the analog of the Gy\'{a}rf\'{a}s' path argument}
\author{Romain Bourneuf\thanks{Univ. Bordeaux, CNRS, Bordeaux INP, LaBRI, UMR 5800, F-33400, Talence, France, \texttt{romain.bourneuf@ens-lyon.fr}}
\and Jana Masa\v{r}\'{i}kov\'{a}\thanks{University of Warsaw, Poland, \texttt{jnovotna@mimuw.edu.pl}.
Supported by Polish National Science Centre PRELUDIUM 21 grant number 2022/45/N/ST6/04232.}
\and Wojciech Nadara\thanks{University of Warsaw, Poland and Technical University of Denmark, Kongens Lyngby, Denmark \texttt{w.nadara@mimuw.edu.pl}. Supported by Independent Research Fund Denmark grant 2020-2023
(9131-00044B) ``Dynamic Network Analysis'' (while being employed in Denmark) and European Union’s Horizon 2020 research and innovation programme, grant agreement No. 948057 — BOBR (while being employed in Poland).
}
\and Marcin Pilipczuk\thanks{University of Warsaw, Poland, \texttt{m.pilipczuk@mimuw.edu.pl}.
Supported by Polish National Science Centre SONATA BIS-12 grant number 2022/46/E/ST6/00143.}}
\date{}
\begin{document}

\maketitle

\begin{abstract}
    For a fixed integer $t \geq 1$, a \emph{($t$-)long claw}, denoted $S_{t,t,t}$, is
    the unique tree with three leaves, each at distance exactly $t$ from the 
    vertex of degree three. Majewski et al.~[ICALP 2022, ACM ToCT 2024]
    proved an analog of the Gy\'{a}rf\'{a}s' path argument for $S_{t,t,t}$-free graphs:
    given an $n$-vertex $S_{t,t,t}$-free graph, one can delete neighborhoods
    of $\Oh(\log n)$ vertices so that the remainder admits an extended strip decomposition
    (an appropriate generalization of partition into connected components)
    into particles of multiplicatively smaller size.
    This statement has proven to be very useful in designing quasi-polynomial time
    algorithms for \textsc{Maximum Weight Independent Set} and related problems
    in $S_{t,t,t}$-free graphs.
    
    In this work we refine the argument of Majewski et al. and show that 
    a constant number of neighborhoods suffice. 
\end{abstract}

\section{Introduction}\label{sec:intro}

The \textsc{Maximum Weight Independent Set} (\textsc{MWIS}) problem is
a fundamental problem in combinatorial optimization, where the objective
is to find an independent set of vertices in a graph such that the sum
of their weights is maximized. The complexity of \textsc{MWIS}
significantly varies depending on the structure of the input graph.
Some graph classes allow for polynomial-time solutions (e.g., chordal graphs), while in general graphs the problem is NP-hard and hard to approximate within an $n^{1-\varepsilon}$ factor~\cite{Zuckerman07, Hastad99}.
A crucial research direction involves identifying graph classes where
the absence of specific substructures simplifies the \textsc{MWIS}
problem. This leads to the study of hereditary graph classes—those
closed under vertex deletion. Equivalently, they can be defined by a list of forbidden
induced subgraphs. 
In particular, $H$-free graphs -- graphs that do not contain a specific graph $H$ as an induced subgraph -- are of significant interest.

As observed by Alekseev in the 1980s~\cite{alekseev1982effect, Alekseev03}, the problem of \textsc{MWIS} remains NP-hard in $H$-free graphs for most graphs $H$: \textsc{MWIS} remains NP-hard unless  every connected component of $H$ is either a path or a subdivided claw (three paths connected via a vertex).   
Moreover, it cannot even be solved in subexponential time, unless the Exponential-Time Hypothesis fails. Thus, significant attention has been devoted to $P_t$-free graphs (graphs excluding a path on $t$ vertices) and to $S_{t,t,t}$-free graphs (graphs excluding a tree with three leaves connected to the root by paths of length $t$).

The class of $P_4$-free graphs (also known as cographs) is well-structured and many problems, including \textsc{MWIS}, are polynomial-time solvable in it. 
Similarly, \textsc{MWIS} is polynomial-time solvable in $S_{1,1,1}$-free graphs~\cite{MINTY1980284,SBIHI198053} (also known as claw-free graphs).
For the latter, claw-free graphs are closely related to line graphs, and finding a maximum-weight independent set in a line graph corresponds to finding a maximum-weight matching. The next smallest subdivided claw is $S_{2,1,1}$ (also known as the fork), for which a polynomial algorithm was shown by Alekseev~\cite{ALEKSEEV20043} in 2004.

However, moving further is far from being trivial.
Currently, $S_{1,2,2}$ is the smallest open case where the polynomiality of \textsc{MWIS} has not been confirmed.
In 2014, Lokshtanov, Vatshelle, and Villanger \cite{DBLP:conf/soda/LokshantovVV14} showed that \textsc{MWIS} is solvable in polynomial time on $P_5$-free graphs using a framework of potential maximal cliques. The result was extended to $P_6$-free graphs by Grzesik, Klimo\v{s}ov{\'{a}}, Ma.~Pilipczuk, and Mi.~Pilipczuk in 2019~\cite{GrzesikKPP19}.  The case of $P_7$-free graphs remains open. 
Several related partial results have been obtained, see for instance~\cite{AbrishamiCDTTV22,AbrishamiCPRS21}. 
In 2020, in a breakthrough result, Gartland and Lokshtanov~\cite{GartlandL20} obtained a quasipolynomial-time algorithm for \textsc{MWIS} in $P_t$-free graphs. 
The result later got significantly simplified by Pilipczuk, Pilipczuk, and Rz\k{a}\.{z}ewski~\cite{PilipczukPR21} (but still quasipolynomial time) and generalized to a larger class of problems and to $C_{>t}$-free graphs (graphs without induced cycle of length more than $t$)~\cite{GartlandLPPR21}.

In $S_{t,t,t}$-free graphs several partial results were obtained. 
Abrishami, Chudnovsky, Dibek, and Rz\k{a}\.zewski announced a polynomial-time algorithm for \textsc{MWIS} in $S_{t,t,t}$-free graphs of bounded degree~\cite{ACDR21} that got later improved~\cite{AbrishamiCPR24} where the same set of authors and Pilipczuk also provide a polynomial-time algorithm for \textsc{MWIS} in graphs excluding a fixed graph whose every component is a subdivided claw as an induced subgraph, and a fixed biclique as a subgraph.
However, much less was known for the general cases (even for a specific small $t$) up until recently. 
In 2023, another breakthrough, a quasipolynomial algorithm for $S_{t,t,t}$-free graphs, was presented by Gartland, Lokshtanov, Masa\v{r}{\'{\i}}k, Pilipczuk, Pilipczuk, and Rz\k{a}\.zewski \cite{Sttt-qpoly}. 
This progress corroborates the conjecture that \textsc{MWIS} is polynomial-time solvable in $H$-free graphs for all the open cases left by the original hardness reductions, that is, whenever $H$ is a forest whose every connected component has at most three leaves. 

The work~\cite{Sttt-qpoly} is built upon two pillars. The first one 
is the branching strategy similar to the first $P_t$-free paper \cite{GartlandL20}.
The second pillar is a work~\cite{Sttt_logn23} that
gives a structural result for $S_{t,t,t}$-free graphs that is an analog 
of the Gy\'{a}rf\'{a}s' path argument from $P_t$-free graphs. 
In this work, we improve the main result of \cite{Sttt_logn23}
to a form with near optimal parameter dependency.
Before we state the result formally, let us discuss the relevant results and techniques.

Firstly, we return to $P_t$-free graphs. 
In the 1980s, Gy\'{a}rf\'{a}s showed that for every fixed $t$ the class of $P_t$-free graphs is $\chi$-bounded~\cite{gyarfas1,gyarfas2}.
Bacs{\'{o}}, Lokshtanov, Marx, Pilipczuk, Tuza,  and van Leeuwen ~\cite{DBLP:journals/algorithmica/BacsoLMPTL19} gave a subexponential-time algorithm for \textsc{MWIS} in $P_t$-free graphs, 
using the following important corollary of the Gy\'{a}rf\'{a}s' path argument. 
\begin{theorem}\label{thm:gyarfas}
Given an $n$-vertex graph $G$ with nonnegative vertex weights, one can in polynomial time find an induced path $Q$ in $G$
such that every connected component of $G-N[V(Q)]$ has weight at most half of the total weight of $V(G)$.
\end{theorem}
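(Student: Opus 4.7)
The plan is to construct $Q$ greedily, one vertex at a time, maintaining the invariant that $G - N[V(Q)]$ has at most one connected component of weight strictly exceeding $w(V(G))/2$ (call it the \emph{heavy} component; uniqueness follows from the majority bound), and that this heavy component strictly shrinks with each extension. I would initialize $Q$ with an arbitrary vertex $v_0$: if no heavy component exists in $G - N[v_0]$, then $Q = (v_0)$ already witnesses the theorem; otherwise, let $C$ be the heavy component.

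At a generic step, with induced path $Q = (v_0, \ldots, v_k)$ and heavy component $C$, the goal is to append a vertex $v_{k+1}$ that (i) is adjacent to $v_k$, (ii) has a neighbor in $C$, and (iii) is non-adjacent to every $v_i$ with $i < k$. To produce such a vertex, I would take a shortest path from $v_k$ into $C$ inside the subgraph $G - \{v_0, \ldots, v_{k-1}\}$ and append the entire prefix of that path (all vertices except the endpoint in $C$) onto $Q$. The shortest-path property guarantees that the appended segment is internally induced and, combined with the invariant that $C$ is a component of $G - N[V(Q)]$, rules out chords back to the earlier part of $Q$. After the extension, any new heavy component is contained in $C \setminus N[v_{k+1}]$ and is therefore strictly smaller than $C$, since $v_{k+1}$ has a neighbor in $C$ by construction.

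Termination then follows from the potential decrease: $|C|$ strictly decreases at each iteration, so the procedure halts after at most $|V(G)|$ extensions. Each iteration performs a single shortest-path computation and a components/weight scan, both polynomial time, which yields the polynomial-time algorithmic guarantee claimed in the statement.

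The main obstacle I anticipate is the chord-freeness check in the extension step, i.e., showing that no newly appended vertex has an edge to an earlier $v_j$ with $j < k$. This is the crux of every Gy\'{a}rf\'{a}s-style path argument and is resolved by a careful shortest-path minimality analysis together with the structural constraint that $C$ lies outside $N[V(Q)]$: any spurious chord would either shorten the chosen shortest path, contradicting minimality, or place a vertex of $C$ inside $N[V(Q)]$, contradicting the definition of $C$.
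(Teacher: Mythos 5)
The overall greedy scheme is the right one, but the extension step---which you yourself identify as the crux---is not actually established by your argument, and as written the rule can output a non-induced path. The difficulty is the direction in which you are using the invariant. Since $C$ is a connected component of $G-N[V(Q)]$, we have $N(C)\subseteq N(V(Q))$; hence the penultimate vertex $u$ of \emph{any} $v_k$--$C$ path (the last vertex before the path enters $C$) is adjacent to \emph{some} vertex of $Q$, with no control over which one. Taking a shortest path in $G-\{v_0,\dots,v_{k-1}\}$ only forbids the path from \emph{passing through} $v_0,\dots,v_{k-1}$; it does not forbid adjacency to them. Worse, if that shortest path has length at least $3$, then $u\notin N(v_k)$ (otherwise the path could be shortened), so $u$ is forced to be adjacent to some $v_i$ with $i<k$, and appending the prefix creates an unavoidable chord. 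Even when the shortest path has length $2$, its middle vertex lies in $N(v_k)\cap N(C)$ but may additionally be adjacent to earlier $v_i$'s, which nothing in your rule excludes. So the two facts you invoke do not combine to give chord-freeness; one of them is precisely what produces the chord.

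The missing ingredient is the extra bookkeeping of the classical Gy\'arf\'as argument (this theorem is not proved in the present paper; it is quoted from Bacs\'o et al., whose proof uses exactly this device): remember the \emph{previous} heavy component as well. Write $Q_k=(v_0,\dots,v_k)$ and let $C_k$ be the heavy component of $G-N[V(Q_k)]$, with $C_{-1}=V(G)$ (start $v_0$ inside the heavy component of $G$, or stop immediately if $G$ has none; this also settles reachability, which your shortest-path step tacitly assumes). Maintain the invariant that $C_k\subsetneq C_{k-1}$ and that $v_k$ was chosen inside $C_{k-2}$. To extend, pick $v_{k+1}$ to be any vertex of $C_{k-1}$ with a neighbour in $C_k$; such a vertex exists because $C_{k-1}$ is connected and $\emptyset\neq C_k\subsetneq C_{k-1}$. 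This single vertex has all three properties you want for free: it lies in $C_{k-1}$, which is disjoint from $N[\{v_0,\dots,v_{k-1}\}]$, so it is distinct from and non-adjacent to every earlier path vertex; and since it is adjacent to $C_k$ but not in $C_k$, it lies in $N[V(Q_k)]$ and hence in $N(v_k)$. In particular the extension is always by exactly one vertex, and no shortest-path computation or multi-vertex prefix is ever needed. The remainder of your argument (uniqueness of the heavy component, the strict shrinkage $C_{k+1}\subseteq C_k\setminus N[v_{k+1}]\subsetneq C_k$, termination after at most $n$ iterations, polynomial work per iteration) is sound once the extension step is repaired along these lines.
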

Observe that in $P_t$-free graphs, $Q$ contains at most $t-1$ vertices. In particular, having connected components of multiplicatively smaller size after removing the neighborhood of a constant number of vertices became very useful in the recursive approach.

For the progress in general $S_{t,t,t}$-free graphs, the paper~\cite{Sttt_logn23} shows that the notion of an \emph{extended strip decomposition}, developed by Chudnovsky and Seymour in their project to understand claw-free graphs~\cite{claw-free-survey}, became useful as an analog of the Gy\'{a}rf\'{a}s' path argument.
For a formal definition of extended strip decomposition, we refer to \cref{sec:prelim}. 
For an intuition to develop, we remark that in an extended strip decomposition of a graph,  we can distinguish \emph{particles} (specific induced subgraphs of the graph). 
The crucial property is that a maximum weight independent set of the whole graph can be combined from solutions obtained independently in individual particles~(\cite{ChudnovskyPPT}). 
Thus, `particles' of an extended strip decomposition can be viewed as analogs of connected components. Particles of multiplicatively smaller size in $S_{t,t,t}$-free graphs have been obtained in \cite{Sttt_logn23}. 

\begin{theorem}[\cite{Sttt_logn23}]\label{thm:logn-result}
  Given an $n$-vertex graph $G$ with nonnegative vertex weights and an integer $t \geq 1$, one can in polynomial time either:
\begin{itemize}
\item output an induced copy of $S_{t,t,t}$ in $G$, or
\item output a set $\mathcal{P}$ consisting of at most $11 \log n+6$ induced paths in $G$,
  each of length at most $t+1$, and a rigid \emph{extended strip decomposition} of
  $G - N[\bigcup_{P \in \mathcal{P}} V(P)]$ whose every particle has weight at most half of the total weight of $V(G)$. 
\end{itemize}
\end{theorem}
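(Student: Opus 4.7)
The plan is to proceed by an iterative refinement. We maintain a set of induced paths $\mathcal{P}$, initially empty, and at each step inspect the residual graph $G' := G - N[\bigcup_{P \in \mathcal{P}} V(P)]$ together with some rigid extended strip decomposition of it (starting from the trivial one whose particles are connected components). If every particle already has weight at most $W/2$, where $W$ is the total weight of $V(G)$, we output the current decomposition. Otherwise, we isolate a heavy particle $Z$ of weight exceeding $W/2$ and look inside it for either an induced $S_{t,t,t}$ in $G$ (which we then output and halt) or a new short induced path $Q$ of length at most $t+1$ whose addition to $\mathcal{P}$ enforces measurable progress.

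The core combinatorial step is a single-particle lemma roughly of the form: inside any heavy particle $Z$, one can find in polynomial time either an induced $S_{t,t,t}$ in $G$, or an induced path $Q$ of length at most $t+1$ such that removing $N[V(Q)]$ and suitably refining the extended strip decomposition around $Z$ splits $Z$ into new particles each of weight at most $(1-\delta)\cdot w(Z)$, for some absolute constant $\delta>0$. This is the $S_{t,t,t}$-free analog of the classical Gy\'arf\'as path argument: we grow a BFS-style layering from a heavy vertex of $Z$; if the layering is deep enough to exhibit three branches of length $t$ emanating from a single vertex we recover $S_{t,t,t}$, otherwise the layering is shallow and one level (together with a short connecting path) can be chosen so that its closed neighborhood cuts $Z$ into roughly balanced pieces compatible with the extended strip decomposition.

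A potential-function argument then bounds the number of outer iterations: each step reduces the weight of the heaviest particle by a multiplicative factor, and by a standard weight-scaling trick the gap between initial and target maximum weights is $\Oh(\log n)$, yielding $\Oh(\log n)$ iterations in total and a polynomial overall running time.

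The main obstacle will be the single-particle lemma, and more precisely the mechanics of updating a \emph{rigid} extended strip decomposition when the neighborhood of a short induced path is removed. Extended strip decompositions are intricate objects involving triangles, strips, and attachment constraints, so one must show that the removal and local re-decomposition around $Q$ preserve all required invariants --- rigidity, polynomial computability, and the multiplicative weight reduction inside $Z$ --- while only touching $Z$ and not the other particles. This will require a careful case analysis of how $Q$ and its neighborhood interact with the strips and triangles of the current decomposition in the $S_{t,t,t}$-free setting.
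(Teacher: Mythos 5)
There is a genuine gap, and it sits exactly where you park it as ``the main obstacle.'' Your proposal never explains how a nontrivial extended strip decomposition is produced in the first place, and the single-particle dichotomy you sketch is false. In an $S_{t,t,t}$-free graph, a deep BFS layering does \emph{not} yield an induced $S_{t,t,t}$: a long induced path (or the line graph of a long path) has BFS depth $\Theta(n)$ from an endpoint and contains no $S_{1,1,1}$, let alone an $S_{t,t,t}$. Conversely, when the layering is shallow there is still no reason that the closed neighborhood of a short path cuts a heavy particle into pieces of weight at most $(1-\delta)\wei(Z)$; the whole difficulty of the $S_{t,t,t}$-free setting is that such graphs contain arbitrarily large ``strip-like'' (line-graph-like) pieces in which no constant-size set of neighborhoods is a balanced separator, which is precisely why the conclusion must be phrased in terms of extended strip decompositions rather than connected components. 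Your plan treats the decomposition as a data structure to be locally repaired after deleting $N[V(Q)]$, but the decomposition cannot be conjured from connected components by such local surgery.

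The missing idea is the three-in-a-tree theorem of Chudnovsky and Seymour (Theorem~\ref{thm:three-in-a-tree} in this paper), which is the engine in \cite{Sttt_logn23} and here. One takes a Gy\'arf\'as path $Q$ (Theorem~\ref{thm:gyarfas}), selects a few vertices $Z$ on it, deletes the neighborhoods of short subpaths of $Q$ around them so that any induced tree of the residual graph through all of $Z$ must contain three pairwise non-touching induced $P_{t+1}$'s rooted at a common branch vertex, and then invokes three-in-a-tree: either it returns such a tree (hence an induced $S_{t,t,t}$), or it returns a rigid extended strip decomposition of the residual graph with $Z$ peripheral. The remaining work --- and the source of the $11\log n + 6$ bound in \cite{Sttt_logn23}, improved to a constant in the present paper --- is controlling the weight of the particles of the decomposition so obtained, using structural facts about how the Gy\'arf\'as path threads through the edge sets $\eta(e)$ (cf.\ Lemma~\ref{lem:gyarfas-path-inside-edges}). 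Without three-in-a-tree or an equivalent structural tool, the proposal has no route to the second output of the theorem, so the argument does not go through. (As a secondary point, your iteration count is also off: a per-step multiplicative reduction by $(1-\delta)$ would reach weight $\wei(V)/2$ in $O(1)$ steps, not $O(\log n)$; the logarithm in \cite{Sttt_logn23} arises for a different reason.)
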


Using this structural result, the authors provided a subexponential-time algorithm with running time $2^{\Oh(\sqrt{n}\log n)}$ and a quasipolynomial-time approximation scheme with running time 
$2^{\Oh(\varepsilon^{-1} \log^{5} n)}$.
Both results improved and simplified the state-of-the-art knowledge by Chudnovsky, Pilipczuk, Pilipczuk, and Thomass{\'{e}}~\cite{ChudnovskyPPT} from 2020.

This result has already found several applications besides simplifying the analysis of the quasipoly\-nomial-time algorithm~\cite{AbrishamiCPR24}.
Most importantly, it became as a black box one of the pillars of the quasipolynomial-time algorithm for \textsc{MWIS} in $S_{t,t,t}$-free graphs~\cite{Sttt-qpoly}. 
There, the key structural ingredient for the algorithm is the following. 
For a fixed integer $t\ge 1$, given a graph $G$, one can in polynomial time find either an induced $S_{t,t,t}$ in $G$, or a balanced separator consisting of $\Oh(\log |V(G)|)$ vertex neighborhoods in $G$, or an extended strip decomposition of $G$ with each particle of weight multiplicatively smaller than the weight of $G$. We remark that the third output gives an extended strip decomposition of the whole graph, in contrast with~\cref{thm:logn-result}.

\textbf{Our Contribution} We address an open question from~\cite{Sttt_logn23} that asked to eliminate the logarithmic factor in output 2 of \cref{thm:logn-result}.
Indeed, it turns out that removing neighborhoods of a constant number of \emph{short} paths is sufficient to obtain an extended strip decomposition with each particle of weight multiplicatively smaller than the total weight of the graph. 
Our main contribution is the following theorem.

\begin{restatable}{theorem}{ourmaintheorem}\label{thm:main-thm}
Given a graph $G = (V, E)$ with nonnegative vertex weights $\wei: V \rightarrow \mathbb{N}$ and an integer $t \geq 1$, one can in polynomial time either:
\begin{itemize}
    \item output an induced copy of $S_{t,t,t}$ in $G$, or
    \item output a set $\mathcal{S}$ of size at most $3t+11$, and a rigid extended strip decomposition of $G - N[\mathcal{S}]$ whose every particle has weight at most $\wei(V)/2$. 
\end{itemize}
\end{restatable}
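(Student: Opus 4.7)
My plan is to reopen the proof of \cref{thm:logn-result} rather than invoke it as a black box, and to replace its halving recursion — which is what produces the logarithmic factor — with an iterative growing process whose depth is directly controlled by $S_{t,t,t}$-freeness. The shape of the target bound $3t + 11$ is very suggestive: a constant initial cost of $11$ pivots to set up an extended strip decomposition and isolate a single heavy particle, and then at most $3t$ further pivots, each of which intuitively corresponds to placing one additional vertex along one of the three arms of a would-be induced $S_{t,t,t}$.

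Concretely, the procedure I would maintain is a pair $(W, \mathcal{S})$ where $W$ is a partial \emph{witness tree} — a center $c$ together with three pairwise anticomplete induced paths $P_1, P_2, P_3$ starting at $c$ — and $\mathcal{S} \supseteq V(W)$ is the current pivot set, of size at most $|V(W)| + 11$. The invariant is that $G - N[\mathcal{S}]$ admits a rigid \esd such that all but at most one particle has weight at most $\wei(V)/2$. At each iteration, if no particle is heavy we return $\mathcal{S}$ and the decomposition; otherwise we let $H$ be the unique heavy particle and show that, because $H$ survives removal of $N[\mathcal{S}]$, one can in polynomial time extract a vertex $v \in V(H)$ that extends the shortest of $P_1, P_2, P_3$ by exactly one vertex while preserving the anticompleteness and inducedness of the three arms. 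Since the total number of vertices in $P_1 \cup P_2 \cup P_3$ cannot reach $3t + 1$ without $W$ itself being an induced $S_{t,t,t}$, the iteration terminates after at most $3t$ extensions, either with the desired decomposition or with the first output of the theorem. The base cost of $11$ pivots is absorbed by running the existing machinery from \cref{thm:logn-result} once, on a carefully chosen subgraph, to produce the initial esd and the initial triple of short witness paths meeting at $c$.

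The main obstacle will be the single-step extension lemma, that is, showing that the presence of a heavy particle $H$ in $G - N[\mathcal{S}]$ always yields a vertex $v \in V(H)$ whose addition to $\mathcal{S}$ simultaneously (i) lengthens one arm of $W$ by one induced vertex, (ii) stays anticomplete to the other two arms and to the already-processed part of the current arm, and (iii) strictly decreases the weight of the heaviest particle in the refreshed esd of $G - N[\mathcal{S} \cup \{v\}]$. Proving (i)--(iii) requires a careful use of the rigidity of the \esd — rigidity is what forbids the pathological attachments of $H$ to $W$ that would otherwise let the heavy particle ``escape'' extension — and a case analysis reminiscent of the Gy\'{a}rf\'{a}s path extension, but now applied in parallel to three growing paths rather than to one. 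Once this lemma is in place, the outer loop is short, linear in $t$, and the final counting is bookkeeping.
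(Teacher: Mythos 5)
There is a genuine gap, and in fact the central ``single-step extension lemma'' is impossible as you state it. You maintain $\mathcal{S} \supseteq V(W)$ and look for the extension vertex $v$ inside the heavy particle $H$ of an \esd of $G - N[\mathcal{S}]$. But every vertex of $G - N[\mathcal{S}]$ is, by definition, non-adjacent to every vertex of $\mathcal{S}$, hence to every vertex of $W$; in particular $v$ cannot be adjacent to the endpoint of any arm $P_i$, so it cannot ``lengthen one arm of $W$ by one induced vertex.'' Requirement (i) can never be met by a vertex of $H$. Beyond this, even if one looked for $v$ elsewhere (say in $N(\mathcal{S})$), there is no reason a vertex extending the \emph{shortest} arm exists while remaining anticomplete to the other two arms: producing three pairwise-anticomplete induced paths meeting at a center is exactly the hard combinatorial problem that the three-in-a-tree theorem exists to solve, and your sketch offers no substitute for it. The ``base cost of 11'' step is also unsubstantiated: \cref{thm:logn-result} run ``once'' does not hand you a constant-size pivot set with a single heavy particle plus an initial witness tripod, and you do not say what the ``carefully chosen subgraph'' is.

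For contrast, the paper's proof is a one-shot argument, not an iterative one. It takes a \emph{minimal} Gy\'{a}rf\'{a}s path $Q$, designates three vertices $x,y,z$ on it together with three disjoint length-$t$ subpaths $Q_x,Q_y,Q_z$ hanging off them (these subpaths are the three arms of the potential $S_{t,t,t}$, which is where the $3t$ in the bound comes from), deletes $N(Q_x\cup Q_y\cup Q_z)$ outside $Q$, and invokes three-in-a-tree on $\{x,y,z\}$: a tree through $x,y,z$ would contain an induced $S_{t,t,t}$, so one obtains an \esd with $x,y,z$ peripheral. The remaining work (\cref{lem:gyarfas-path-inside-edges}, \cref{lem:small-dom-ngbhd}, and the minimality of $Q$ via its last vertex $\ell$) shows that a heavy full-edge particle forces either a contradiction or a set of at most $8$ further vertices whose closed neighborhood disconnects everything into small components. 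If you want to salvage your plan, you would need to either relocate the extension vertex outside $G-N[\mathcal{S}]$ and prove an anticompleteness statement of three-in-a-tree strength, or abandon the iterative tripod-growing and adopt the one-shot structure above.
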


Similarly, as in~\cite{Sttt_logn23}, our starting point is the Gy\'{a}rf\'{a}s' path, refining the selection of splitting vertices. If its length is within a small multiple of $t$, we even have a stronger result of components of half of the weight of $G$. 
Otherwise, we identify specific vertices of the Gy\'{a}rf\'{a}s' path and query the three-in-a-tree theorem, originally developed by Chudnovsky and Seymour \cite{DBLP:journals/combinatorica/ChudnovskyS10} with improved running time by Lai, Lu, and Thorup~\cite{LaiLT20}, on them.
The output of the three-in-a-tree theorem is either an induced tree connecting the given vertices or an extended strip decomposition of $G$. 
By the choice of the vertices we make sure the second output applies, otherwise an induced copy of $S_{t,t,t}$ is found.
The main difference from~\cite{Sttt_logn23} lies in a more involved choice of the vertices with a better understanding of the structure of the extended strip decomposition. 
Among others, we use the obvious but powerful property of the last two vertices of the Gy\'{a}rf\'{a}s' path. 
After removing the neighborhood of the Gy\'{a}rf\'{a}s' path up to its second-to-last vertex, the largest connected component is still `big' (has weight larger than half of the weight of $G$). However, after removing the neighborhood of the last vertex, its weight suddenly drops below this threshold.
This observation lets us look more closely at the interaction of the `big' particle and the `big' connected component.

We remark that our improvement does not yield a polynomial algorithm. 
Applying it to \cite{Sttt-qpoly} eliminates one logarithmic factor in the exponent. 
However, it significantly simplifies the algorithmic results for \textsc{MWIS}
of~\cite{AbrishamiCPR24}: the polynomial-time algorithm in 
$S_{t,t,t}$-free graphs of bounded degree is now immediate (just apply
Theorem~\ref{thm:main-thm}, exhaustively branch on $N[\mathcal{S}]$, and recurse 
on the particles of the extended strip decomposition)
and the algorithm for $S_{t,t,t}$-free $K_{s,s}$-free graphs can be significantly simplified.

\medskip

The majority of this work was done during the
Sparse Graphs Coalition workshop 
``Algebraic, extremal, and structural methods and problems in graph colouring''
that happened online in February 2024. 

We remark that a very similar result to Theorem~\ref{thm:main-thm}
has been independently obtained
by Chudnovsky, Codsi, Milani\v{c}, Lokshtanov, and Sivashankar~\cite{CCMLS25},
via a slightly different proof.

\section{Preliminaries}\label{sec:prelim}

\paragraph*{Notation.} 

All graphs are simple. Let $G$ be a graph. For $X \subseteq V(G)$, by $G[X]$ we denote the subgraph of $G$ induced by $X$, i.e., $(X, \{uv \in E(G) : u,v \in X\})$.
If the graph $G$ is clear from the context, we will often identify induced subgraphs with their vertex sets.

For a vertex $v$, by $N_G(v)$ we denote the set of neighbors of $v$, and by $N_G[v]$ we denote the set $N_G(v) \cup \{v\}$.
For a set $X \subseteq V(G)$, we also define $N_G(X) \coloneqq  \bigcup_{v \in X} N_G(v) \setminus X$, and $N_G[X] = N_G(X) \cup X$.
If it does not lead to confusion, we omit the subscript and write simply $N( \cdot)$ and $N[ \cdot]$.
By $\binom{V(G)}{3}$ we mean the set of all subsets of $V(G)$ of size 3.
By $T(G)$, we denote the set of all triangles in $G$. Note that $T(G) \subseteq \binom{V(G)}{3}$.
Similarly to writing $xy \in E(G)$, we will write $xyz \in T(G)$ to indicate that $G[\{x,y,z\}]$ is a triangle.
We say that two sets $X,Y \subseteq V(G)$ \emph{touch} if $X \cap Y \neq \emptyset$ or there is an edge with one end in $X$ and another in $Y$.
For a family $\mathcal{Q}$ of sets, by $\bigcup \mathcal{Q}$ we denote $\bigcup_{Q \in \mathcal{Q}} Q$.
For a function $\wei : V \to \Z$ and subset $V' \subseteq V$, we denote $\wei(V')\coloneqq \sum_{v \in V'} \wei(v)$.

A graph $G$ is said to be \emph{F-free} for some graph $F$ if no induced subgraph of $G$ is isomorphic to $F$. 
For integers $t,a,b,c > 0$, by $P_t$ we denote the path on $t$
vertices, and by $S_{a,b,c}$, we denote the tree with three leaves at respective distance $a$, $b$, and $c$ from the unique vertex of degree $3$ of the tree.

\paragraph*{Extended strip decompositions.} 
\begin{definition}[Extended strip decomposition, \cite{DBLP:journals/combinatorica/ChudnovskyS10}]
An \emph{extended strip decomposition} of a graph $G$ is a pair $(H, \eta)$ that consists of:
\begin{itemize}
\item a simple graph $H$,
\item a set $\eta(x) \subseteq V(G)$ for every $x \in V(H)$,
\item a set $\eta(xy) \subseteq V(G)$ for every $xy \in E(H)$, and its subsets $\eta(xy,x),\eta(xy,y) \subseteq \eta(xy)$,
\item a set $\eta(xyz) \subseteq V(G)$  for every $xyz \in T(H)$,
\end{itemize}
which satisfy the following properties:
\begin{enumerate}
\item $\{\eta(o)~|~o \in V(H)\cup E(H) \cup T(H)\}$ is a partition of $V(G)$,
\item for every $x \in V(H)$ and every distinct $y,z \in N_H(x)$, the set $\eta(xy,x)$ is complete to $\eta(xz,x)$,
\item every $uv \in E(G)$ is contained in one of the sets $\eta(o)$ for $o \in V(H) \cup E(H)\cup T(H)$, or is as follows:
\begin{itemize}
\item $u \in \eta(xy,x), v\in \eta(xz,x)$ for some $x \in V(H)$ and $y,z \in N_H(x)$, or
\item $u \in \eta(xy,x), v\in \eta(x)$ for some $xy \in E(H)$, or
\item $u \in \eta(xyz)$ and $v\in \eta(xy,x) \cap \eta(xy,y)$ for some $xyz \in T(H)$. 
\end{itemize}
\end{enumerate}
\end{definition}
See~\cite[Figure 1]{Sttt_logn23} for an illustration of the definition.

For an edge $xy \in E(H)$, the sets $\eta(xy, x)$ and $\eta(xy, y)$ are called the \emph{interfaces} of the edge $xy$.
An extended strip decomposition $(H,\eta)$ is \emph{rigid} if (i) for every $xy \in E(H)$ it holds that $\eta(xy,x) \neq \emptyset$,
and (ii) for every $x \in V(H)$ such that $x$ is an isolated vertex it holds that $\eta(x) \neq \emptyset$.
Observe that if we \emph{restrict} $\eta$ to $V'\subset V(G)$, i.e.\ we keep in $\eta$ only vertices of $V'$, $(H,\eta)$ after the restriction remains an extended strip decomposition, but it might not be rigid anymore.

We say that a vertex $v \in V(G)$ is \emph{peripheral} in $(H,\eta)$ if there is a degree-one vertex $x$ of $H$,
such that $\eta(xy,x)=\{v\}$, where $y$ is the (unique) neighbor of $x$ in $H$.
Note that we can also assume $\eta(x) = \emptyset$, as otherwise all vertices
from $\eta(x)$ can be moved to $\eta(xy)$. 
For a set $Z \subseteq V(G)$, we say that $(H, \eta)$ is an \emph{extended strip decomposition of $(G,Z)$} if $H$ has $|Z|$ degree-one vertices and each vertex of $Z$ is peripheral in $(H,\eta)$. 
The following theorem by Chudnovsky and Seymour~\cite{DBLP:journals/combinatorica/ChudnovskyS10}
is a slight strengthening of their celebrated solution of the famous \emph{three-in-a-tree} problem.

\begin{theorem}[{Chudnovsky, Seymour~\cite[Section 6]{DBLP:journals/combinatorica/ChudnovskyS10}}]\label{thm:three-in-a-tree}
Let $G$ be an $n$-vertex  graph and $Z \subseteq V(G)$ with $|Z| \geq 2$.
There is an algorithm that runs in time $\Oh(n^4)$ and returns one of the following:
\begin{itemize}
\item an induced subtree of $G$ containing at least three elements of $Z$,
\item a rigid extended strip decomposition of $(G,Z)$.
\end{itemize}
\end{theorem}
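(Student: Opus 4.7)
The plan is to combine the Gy\'arf\'as path argument with a single invocation of the three-in-a-tree theorem (\cref{thm:three-in-a-tree}), where the extra care lies in selecting a constant number of \emph{short-distance} anchors on the path. Begin by running \cref{thm:gyarfas} to produce an induced path $Q = v_0 v_1 \ldots v_k$ such that every connected component of $G - N[V(Q)]$ has weight at most $W := \wei(V)/2$. If $|V(Q)| \leq 3t+11$, simply set $\mathcal{S} := V(Q)$ and return the trivial rigid extended strip decomposition in which $H$ consists of one isolated vertex per connected component of $G - N[\mathcal{S}]$ and $\eta$ places each component on its own isolated vertex. Every particle is then a single connected component of $G - N[\mathcal{S}]$, which has weight at most $W$ by the property of the Gy\'arf\'as path.

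Assume therefore $|V(Q)| > 3t+11$. Here the key structural lever, announced in the introduction, is the ``last two vertices'' property: there is a unique component $B$ of $G - N[\{v_0,\ldots,v_{k-1}\}]$ of weight $>W$, it contains $v_k$, and because the path terminated at $v_k$, every component of $B - N[v_k]$ has weight at most $W$. In other words, $v_k$ is a \emph{weight separator} inside the heavy region $B$. I now define $\mathcal{S}$ to consist of $v_k$, a constant number of vertices near each endpoint of $Q$ (to cleanly kill non-$B$ components touching the endpoints), and $O(t)$ additional vertices of $Q$ placed at pairwise distances roughly $t+1$ along $Q$. A careful count gives $|\mathcal{S}| \leq 3t+11$. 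These spacings are chosen with the following goal in mind: any induced subtree of $G$ containing three appropriately chosen ``remote'' vertices can be combined with subpaths of $Q$ into an induced $S_{t,t,t}$.

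Next, choose a set $Z \subseteq V(G - N[\mathcal{S}])$ of size three consisting of one vertex of $B \setminus N[\mathcal{S}]$ together with two vertices sitting in the surviving middle portion of $Q$, arranged so that any induced subtree of $G$ meeting $Z$ passes through three pairwise ``$t$-separated'' segments of $Q$. Apply \cref{thm:three-in-a-tree} on $(G - N[\mathcal{S}], Z)$. If an induced subtree is returned, stitch it with the appropriate subpaths of $Q$ to extract an induced $S_{t,t,t}$ in $G$ and output the first alternative. Otherwise, the theorem returns a rigid extended strip decomposition $(H, \eta)$ of $(G - N[\mathcal{S}], Z)$. The claim is then that every particle of $(H, \eta)$ has weight at most $W$: a particle disjoint from $B$ lies in a region that, after removing $N[\mathcal{S}]$, is contained in a union of components of $G - N[V(Q)]$, all of weight $\leq W$; whereas a particle meeting $B$ intersects only $B - N[v_k]$ (since $v_k \in \mathcal{S}$), and the peripheral placement of $Z$ together with the induced-tree-to-$S_{t,t,t}$ conversion forces each such particle to lie within a single light piece of $B - N[v_k]$.

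The main obstacle is the last step: ruling out that the ESD glues light pieces of $B - N[v_k]$ across its strips and triangles into a heavy particle. This requires an interface-level analysis of $(H, \eta)$ using both the peripheral structure of $Z$ and the ``any induced tree on $Z$ yields $S_{t,t,t}$'' property, leveraging the adjacency constraints of the definition of ESD between $\eta(xy,x)$ and $\eta(xz,x)$. Calibrating the anchor spacings in $Q$ so that the argument closes with $|\mathcal{S}| \leq 3t+11$, rather than with an $O(t)$ that hides a larger constant, is the delicate quantitative accounting at the heart of the proof.
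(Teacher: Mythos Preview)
Your proposal does not address the stated theorem. \cref{thm:three-in-a-tree} is the Chudnovsky--Seymour three-in-a-tree result, which the paper imports as a black box from the cited reference and never proves; indeed, you invoke it yourself as a subroutine (``Apply \cref{thm:three-in-a-tree} on $(G - N[\mathcal{S}], Z)$''). What you have written is a sketch for \cref{thm:main-thm}, not a proof of \cref{thm:three-in-a-tree}.

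Even read as a plan for \cref{thm:main-thm}, the outline diverges from the paper's argument and leaves the decisive step open. The paper does \emph{not} choose $\mathcal{S}$ first and then run three-in-a-tree on $G-N[\mathcal{S}]$, nor does it place any element of $Z$ in the big component $B$. It takes $Z=\{x,y,z\}$ on $Q$ itself (the first vertex and two vertices near the tail, positioned so that the three length-$(t{+}1)$ subpaths $Q_x,Q_y,Q_z$ force any induced tree through $Z$ to contain an $S_{t,t,t}$), runs three-in-a-tree on $G'=G-(N(Y)-V(Q_1)-V(Q_2))$, and only \emph{after} analysing the resulting decomposition constructs $\mathcal{S}$. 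If a big particle $A=A_{pq}^{pq}$ appears, the paper uses \cref{lem:gyarfas-path-inside-edges} and \cref{cor:path-inside-edges-internal} to prove $V(Q_1)\cap\eta(pq)=\emptyset$ (\cref{cl:1}), then bounds $|N_{G'}[A]\cap V(Q_1)|\leq 4$ and adds those four vertices, the two from \cref{lem:small-dom-ngbhd}, plus $z'$ and $\ell$ to $Y$. Your ``main obstacle'' paragraph is exactly where a soft interface argument will not close: what is needed is the precise contradiction in \cref{cl:1}, namely that if $Q_1$ enters $\eta(pq)$ then $Q_2$ cannot touch $N_{G'}[A]$, yet $\ell\in V(Q_2)$ has a neighbour in the big component $C\subseteq A$. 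Without that lemma-level control and the correct placement of $Z$ on $Q$, the sketch does not yield a proof.
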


The running time was improved to $\Oh(m\log^2n)$ (where $m=|E|$) in~\cite{LaiLT20}.

\begin{definition}
Let $(H, \eta)$ be an extended strip decomposition of a graph $G$.
We distinguish the following special subsets of $V(G)$:
\begin{align*}
\textrm{vertex particle:} &\quad A_{x} \coloneqq  \eta(x) \text{ for each } x \in V(H),\\
\textrm{edge interior particle:} &\quad A_{xy}^{\perp} \coloneqq  \eta(xy) \setminus (\eta(xy,x) \cup \eta(xy,y)) \text{ for each } xy \in E(H),\\
\textrm{half-edge particle:} &\quad A_{xy}^{x} \coloneqq   \eta(x) \cup \eta(xy) \setminus \eta(xy,y) \text{ for each } xy \in E(H),\\
\textrm{full edge particle:} &\quad A_{xy}^{xy} \coloneqq  \eta(x) \cup \eta(y) \cup \eta(xy) \cup \bigcup_{z ~:~ xyz \in T(H)} \eta(xyz) \text{ for each } xy \in E(H),\\
\textrm{triangle particle:} &\quad A_{xyz} \coloneqq  \eta(xyz) \text{ for each } xyz \in T(H).
\end{align*}
We refer to these sets as \emph{particles} and discern between their respective \emph{types}.
\end{definition}

A vertex particle $A_x$ is \emph{trivial} if $x$ is an isolated vertex in $H$.
Similarly, an extended strip decomposition $(H,\eta)$ is \emph{trivial} if $H$ is an edgeless graph. 
Given a weight function $\wei : V(G) \to \mathbb{N}$, we say that a particle is \emph{small} if its weight is at most $\frac{\wei(V(G))}{2}$, and an \esd $(H,\eta)$ of $G$ is \emph{refined} if all its particles are small. 

\section{The result}

We restate the main theorem here.
\ourmaintheorem*

\subsection{Overview of the Proof}

Before the actual proof, we show several properties of extended strip decompositions.
For the purposes of this overview, the most important is \cref{lem:gyarfas-path-inside-edges}, which states that if $(H, \eta)$ is an \esd of a graph $G$ then any induced path $P$ (in $G$) between peripheral vertices is entirely contained in edge particles. Moreover, in each edge particle, $P$ has at most one vertex in each interface.

Let us now describe the key ideas of the proof of \cref{thm:main-thm}.
We start by considering a Gy\'{a}rf\'{a}s' path $Q$ in $G$. 
If $Q$ is short, we can set $\mathcal{S} = V(Q)$ and return a trivial \esd of $G - N[\mathcal{S}]$ satisfying the requirements of~\cref{thm:main-thm}, so assume that $Q$ is long. 
First, we mark specific vertices of $Q$ on which we will query the three-in-a-tree theorem: The first vertex of $Q$ (call it $x$) and the vertices at distance $t+4$ (call it $y$) and $t+2$ (call it $z$) from the end of $Q$.
Before querying the three-in-a-tree theorem, we remove the neighborhood outside $Q$ of three subpaths of $Q$: the subpaths of $Q$ starting at $x$ and $z$ and taking the next $t$ vertices in $Q$, respectively, and the subpath taking the $t$ vertices of $Q$ preceding $y$. We also remove the vertex 
of $Q$ between $y$ and $z$.  
Observe that this selection ensures that the three-in-a-tree theorem either outputs an induced copy of $S_{t,t,t}$ (which is a desired output of ~\cref{thm:main-thm}) or an extended strip decomposition of the graph minus the mentioned neighborhoods.

We take a closer look at the returned extended strip decomposition. If all its particles are small, we can simply return it, so assume it has a big particle $A$. 
Let us assume that $A$ is a full-edge particle, i.e., $A=A_{pq}^{pq}$, the other cases being easy to handle.
We distinguish two cases depending on whether the subpath $Q_1$ of $Q$ from $x$ to $y$ intersects $A$ or not.
If $Q_1$ is disjoint from $A$, the only vertices of $Q_1$ that can have a neighbor in $A$ are vertices in an interface of a neighboring edge of $pq$ in $H$. 
\cref{lem:gyarfas-path-inside-edges} implies that there can be at most four of them. 
Finally, using that $Q$ is a Gy\'{a}rf\'{a}s' path, we observe that after removing the neighborhoods of the three previously mentioned subpaths, of these four vertices and of four other well-chosen vertices, every remaining connected component has weight at most half of the total weight of $G$.
If $Q_1$ intersects $A$, denote by $Q_2$ be the subpath of $Q$ from $z$ to the end of $Q$. The fact that $Q$ is induced and \cref{lem:gyarfas-path-inside-edges} together imply that $Q_2$ does not have any neighbor in $A$. From the property of  Gy\'{a}rf\'{a}s' path, letting $\ell$ denote the last vertex of $Q$, we have that $G\setminus N[V(Q)-\ell]$ has a big connected component $C$ but $G\setminus N[V(Q)]$ doesn't. Therefore, $\ell$ has a neighbor in $V(C)$. Two big parts must intersect, so $V(C)$ and $A$ intersect. It then follows from \cref{lem:gyarfas-path-inside-edges} that $V(C) \subseteq A$. This contradicts the fact that $Q_2$ does not have any neighbor in $A$.

\subsection{Proof of~\cref{thm:main-thm}}
We start with several properties of extended strip decompositions.

\begin{lemma} \label{lem:small-dom-ngbhd}
    Let $(H, \eta)$ be a rigid extended strip decomposition of a graph $G$. Let $A = A_{xy}^{xy}$ be a full-edge particle. There exists a set $X_A$ of size at most 2 such that $N_G(A) \subseteq N_G(X_A)$.
\end{lemma}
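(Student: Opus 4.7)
The plan is to locate the external neighbors of $A$ precisely inside the decomposition and then dominate them with at most two vertices chosen from $A$ itself.

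Writing
\[
A \;=\; \eta(x)\cup\eta(y)\cup\eta(xy)\cup\bigcup_{z\,:\,xyz\in T(H)}\eta(xyz),
\]
and using that $\{\eta(o)\}_{o}$ partitions $V(G)$, every $v\in V(G)\setminus A$ lies in exactly one set $\eta(o)$ where $o$ is either a vertex in $V(H)\setminus\{x,y\}$, an edge $rs\in E(H)$ with $rs\neq xy$, or a triangle $rst\in T(H)$ that does not contain the edge $xy$. I would dispatch these three cases by going through the four admissible forms of an edge of $G$ listed in the definition of an \esd. The unifying observation that makes every sub-case collapse is that a set $\eta(o')$ can meet $A$ only when $o'\in\{x,y,xy\}$ or $o'$ is a triangle containing the edge $xy$. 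A short case check then yields: if $o$ is a vertex outside $\{x,y\}$, or a triangle not containing the edge $xy$, then $v$ has no neighbor in $A$ at all; while if $o=rs$ is an edge with $rs\neq xy$, then a neighbor of $v$ in $A$ can only be produced via an edge of form~(b) or~(c) at a shared endpoint of $rs$ and $xy$, or via form~(d) through a triangle on $xy$, and each of these three options forces $v$ into the interface $\eta(rs,r)$ for some $r\in\{x,y\}$. Consequently,
\[
N_G(A)\;\subseteq\;\bigcup_{s\in N_H(x)\setminus\{y\}}\eta(xs,x)\;\cup\;\bigcup_{s\in N_H(y)\setminus\{x\}}\eta(ys,y).
\]

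Then I would invoke rigidity to choose $a\in\eta(xy,x)$ and $b\in\eta(xy,y)$; both sets are nonempty and both are contained in $A$. By the completeness axiom at the vertex $x$, the interface $\eta(xy,x)$ is complete to $\eta(xs,x)$ for every $s\in N_H(x)\setminus\{y\}$, so $a$ is adjacent to every vertex of the first union above; symmetrically, $b$ is adjacent to every vertex of the second union. Since every $v\in N_G(A)$ lies outside $A$, it lies outside $\{a,b\}\subseteq A$, hence $v\in N_G(\{a,b\})$. Setting $X_A\coloneqq\{a,b\}$, which has size at most $2$, finishes the proof. The only real obstacle is the bookkeeping in the first step: the three choices for the type of $o$ combine with sub-choices for the position of $v$ inside $\eta(rs)$ (lying in neither, one, or both interfaces) and the four edge forms, producing a small matrix of cases; each collapses instantly via the observation that $\eta(o')\cap A\neq\emptyset$ only in the enumerated situations.
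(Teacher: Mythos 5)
Your proposal is correct and follows essentially the same route as the paper: both arguments show that every external neighbor of $A$ must lie in an interface $\eta(f,r)$ for some edge $f\neq xy$ incident to $r\in\{x,y\}$, and then use rigidity to pick one vertex in each of $\eta(xy,x)$ and $\eta(xy,y)$, which dominate those interfaces by the completeness axiom. The paper merely states the localization of $N_G(A)$ more tersely where you spell out the case analysis over edge types.
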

\begin{proof}
    Since $(H, \eta)$ is rigid, there exist $v_x \in \eta(xy, x)$ and $v_y \in \eta(xy, y)$.
    Set $X_A = \{v_x, v_y\}$.
    Let $u \in N_G(A)$. Since $u \notin A$ is adjacent to a vertex in $A = A^{xy}_{xy}$, there exists $z \in \{x, y\}$ and an edge $f \neq xy$ incident to $z$ such that $u \in \eta(f, z)$. 
    Then, $u$ is adjacent to $v_z \in \{v_x, v_y\}$ so $u \in N_G(X_A)$.
\end{proof}
We note that the above observation was mentioned in~\cite[Observation 8]{Sttt_logn23}, we include it here for completeness.

As discussed above, if $(H, \eta)$ is an extended strip decomposition of a graph $G$, if $G'$ is an induced subgraph of $G$ and $\eta'$ is the restriction of $\eta$ to $V(G')$ then $(H, \eta')$ is an extended strip decomposition of $G'$, but it is not necessarily rigid anymore. 
The following lemma shows how to modify it to obtain a rigid extended strip decomposition, without increasing too much the maximum weight of a particle.

\begin{lemma}\label{lem:make-rigid}
    Let $G = (V, E)$ be a graph with nonnegative vertex weights $\wei: V \rightarrow \mathbb{N}$, $w \geq \wei(V)/2$ and $(H, \eta)$ be an extended strip decomposition of $G$ whose every particle has weight at most $w$.
    Then, one can in polynomial time compute either a rigid extended strip decomposition
    of $G$ whose every particle has weight at most $w$, or a set $X \subseteq V(G)$ of size at most $2$ such that every connected component of $G-N[X]$ has weight at most $w$. 
\end{lemma}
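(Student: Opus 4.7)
My plan is to run an iterative algorithm on $(H,\eta)$ that, in each step, either performs a local modification which strictly reduces $|V(H)|+|E(H)|$ while keeping every particle of weight at most $w$, or halts by outputting a set $X\subseteq V(G)$ of size at most $2$ such that every connected component of $G-N[X]$ has weight at most $w$. There are two kinds of rigidity defects to address: (i) an isolated vertex $x \in V(H)$ with $\eta(x)=\emptyset$, which is immediately handled by deleting $x$ from $H$, leaving every particle intact; and (ii) an edge $xy \in E(H)$ with, say, $\eta(xy,x)=\emptyset$, which is the crux of the argument.

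For a defect of type (ii), the proposed modification is to delete the edge $xy$ from $H$ and to move $\eta(xy)$, together with every triangle particle $\eta(xyz)$ for $xyz\in T(H)$, into the vertex particle $\eta(y)$. The new vertex particle at $y$ is then $A_{xy}^{xy}\setminus\eta(x)$, which has weight at most $w$. Using $\eta(xy,x)=\emptyset$ together with the allowed edge types of an \esd, most of the moved $G$-edges are re-described legitimately, via the rules relating $\eta(y)$ to $\eta(yu,y)$ and via the fact that $\eta(yz,y)\cap\eta(yz,z)\subseteq\eta(yz)$ becomes internal to $\eta(yz)$.

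This modification can fail in two structural ways, and in each case I will extract $X$ rather than continue. Failure~(a): a moved triangle particle $\eta(xyz)$ has a $G$-edge into $\eta(xz,x)\cap\eta(xz,z)$, which cannot be accommodated after the move because it would connect the enlarged vertex particle at $y$ to an interface of an edge at $x$; I then choose $X=\{v_{xz},v_y\}$ with $v_{xz}\in\eta(xz,x)\cap\eta(xz,z)$ and $v_y$ in any nonempty interface at $y$, and use the completeness of interfaces at the common vertices $x$, $z$, and $y$, in the spirit of \cref{lem:small-dom-ngbhd}, to dominate the boundary of $A_{xy}^{xy}\cup A_{xz}^{xz}$. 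Failure~(b): the modification is legal but some enlarged full-edge particle $A_{yw}^{yw}$ with $w\in N_H(y)\setminus\{x\}$ has weight exceeding $w$; in this case I pick $v_1,v_2$ in two distinct nonempty interfaces at $y$, and the completeness at $y$ yields $N_G[\{v_1,v_2\}]\supseteq\eta(y)\cup\bigcup_{u\in N_H(y)}\eta(yu,y)$. Removing this set disconnects the graph along the boundary at $y$, so each remaining connected component lies inside a single original full-edge particle, with the only subtle case being the interior $A_{xy}^\perp$, which has no external $G$-edges at all and hence forms its own light component.

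The main obstacle I expect is the detailed case analysis inside the two failure modes, in particular the verification that the chosen $X$ really cuts $G-N[X]$ into components each fitting inside a single small particle. This verification requires patiently enumerating the allowed edge types of the \esd and making repeated use of the completeness between interfaces that meet at a common vertex.
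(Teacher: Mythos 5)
Your reduction steps (deleting empty isolated vertices; when $\eta(xy,x)=\emptyset$, deleting the edge $xy$ and absorbing $\eta(xy)$ and the sets $\eta(xyz)$ into $\eta(y)$) and your identification of the two ways this can fail are sound, and the new vertex particle $A_{xy}^{xy}\setminus\eta(x)$ is indeed light. The gap is in both failure modes, and it is the same gap: you invoke a $2$-vertex cut without having certified that some particle has weight \emph{exceeding} $w$, and without ensuring that the heavy region itself shatters into light pieces. A $2$-element set $X$ dominating $N_G(A)$ only helps because then every component of $G-N[X]$ lies in $A$ or in $V\setminus A$; the bound $\wei(V\setminus A)\leq\wei(V)-w\leq w$ is available \emph{only when} $\wei(A)>w$. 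In your Failure (a) the obstruction is purely structural, so $V\setminus(A_{xy}^{xy}\cup A_{xz}^{xz})$ may well weigh more than $w$ and form a single component; moreover $A_{xy}^{xy}\cup A_{xz}^{xz}$ can weigh up to $2w$ and stay connected after removing $N[X]$. In Failure (b) the separation claim is false: removing $\eta(y)\cup\bigcup_{u}\eta(yu,y)$ does not confine components to single full-edge particles --- the component containing $\eta(u)$ for a neighbor $u$ of $y$ still reaches $\eta(uv)$, $\eta(v)$, and onward, so it can cover essentially everything outside the vicinity of $y$ (take $H$ a long path $y\!-\!u\!-\!v\!-\!\cdots$ with weight spread along it for a concrete counterexample). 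A smaller issue: the inclusion $N_G[\{v_1,v_2\}]\supseteq\eta(y)$ is not guaranteed, since the definition only \emph{permits}, and does not require, edges between $\eta(yu,y)$ and $\eta(y)$.

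The paper sidesteps exactly this difficulty with a device you are missing: it works with a \emph{relaxed} decomposition in which $\eta(xyz)$ may be nonempty for triples that are not triangles. When the edge $xy$ is deleted, the sets $\eta(xyz)$ are \emph{not} absorbed anywhere; their edges into $\eta(xz,x)\cap\eta(xz,z)$ remain legal for a triple, so your Failure (a) never arises. Only at the very end are the leftover triple sets merged back, and the single merge that can create a heavy particle (the triple adjacent to two edges $xa$ sharing the vertex $x$) produces a full-edge particle $A'=A\cup\eta(xyz)$ with $\wei(A')>w$, where $A$ and $\eta(xyz)$ are non-adjacent and each light; only then is the $2$-vertex dominating cut applied, and both required bounds ($\wei(V\setminus A')\leq w$ and lightness of the components inside $A'$) hold. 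To repair your argument you would need a comparable mechanism for postponing the triangle sets, or a different cut construction in your two failure modes; as written, both terminate with a set $X$ that does not satisfy the lemma's conclusion.
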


\begin{proof}
For the purposes of the proof, we introduce the notion of \emph{relaxed extended strip decomposition}, which is similar to that of extended strip decomposition, except that we have a set $\eta(xyz)$ for every triple of vertices $xyz$ in $H$ and not only for every triangle $xyz$ in $H$. 
We require the same properties as in an extended strip decomposition when replacing $T(H)$ by $\binom{V(H)}{3}$.
The particles of a relaxed extended strip decomposition are the vertex particles, the full edge particles and the triple particles, each defined similarly to the corresponding particle in an extended strip decomposition when replacing $T(H)$ by $\binom{V(H)}{3}$.
Note that every extended strip decomposition naturally corresponds to a relaxed extended strip decomposition and that a relaxed extended strip decomposition such that $\eta(xyz) = \emptyset$ whenever $xyz \notin T(H)$ naturally yields an extended strip decomposition.

We view $(H, \eta)$ as a relaxed extended strip decomposition of $G$, and we start by ensuring that every edge $xy \in E(H)$ satisfies $\eta(xy, x) \neq \emptyset$. 
Let $xy \in E(H)$ be such that $\eta(xy, x) = \emptyset$.
If $\eta(xy, y) = \emptyset$ too, let $H'$ be the graph obtained from $H$ by removing the edge $xy$ and adding an isolated vertex $v$. For every $o \in V(H) \cup E(H) \cup \binom{V(H)}{3} \setminus \{xy\}$, set $\eta'(o) = \eta(o)$, and set $\eta'(v) = \eta(xy)$ and $\eta'(vu_1u_2) = \emptyset$ for every newly created triple of vertices. 
If $\eta(xy, y) \neq \emptyset$, let $H'$ be the graph obtained from $H$ by removing the edge $xy$ and adding a vertex $z$ adjacent only to $y$. For every $o \in V(H) \cup E(H) \cup \binom{V(H)}{3} \setminus \{xy\}$, set $\eta'(o) = \eta(o)$, and set $\eta'(zy) = \eta'(zy, z) = \eta(xy)$, $\eta'(zy, y) = \eta(xy, y)$, $\eta'(z) = \emptyset$ and $\eta'(zu_1u_2) = \emptyset$ for every newly created triple of vertices.

Note that in both cases $(H', \eta')$ is still a relaxed extended strip decomposition of $G$ whose every particle has weight at most $w$.
When going from $(H, \eta)$ to $(H', \eta')$, the number of edges with an empty interface decreases strictly.
Therefore, by repeating this operation, we eventually get that every edge $xy \in E(H)$ satisfies $\eta(xy, x) \neq \emptyset$.

Suppose now that there exists an isolated vertex $x$ of $H$ and a triple $xyz \in \binom{V(H)}{3}$ such that $\eta(xyz) \neq \emptyset$.
If $yz \in E(H)$, let $H' = H$ and for every $o \in V(H) \cup E(H) \cup \binom{V(H)}{3} \setminus \{xyz, yz\}$, set $\eta'(o) = \eta(o)$, set $\eta'(xyz) = \emptyset$ and $\eta'(yz) = \eta(yz) \cup \eta(xyz), \eta'(yz, y) = \eta(yz, y), \eta'(yz, z) = \eta(yz, z)$.
If $yz \notin E(H)$, let $H'$ be the graph obtained from $H$ by adding an isolated vertex $v$. For every $o \in V(H) \cup E(H) \cup \binom{V(H)}{3} \setminus \{xyz\}$, set $\eta'(o) = \eta(o)$, and set $\eta'(v) = \eta(xyz)$, $\eta'(xyz) = \emptyset$ and $\eta'(vu_1u_2) = \emptyset$ for every newly created triple of vertices. 

Observe that in both cases $(H', \eta')$ is still a relaxed extended strip decomposition of $G$ whose every particle has weight at most $w$.
When going from $(H, \eta)$ to $(H', \eta')$, we create no edge with an empty interface and the number of triples $xyz \in \binom{V(H)}{3}$ such that $\eta(xyz) \neq \emptyset$ and $x$ is isolated in $H$ decreases strictly.
Therefore, by repeating this operation, we eventually get that no edge has an empty interface and for every triple $xyz \in \binom{V(H)}{3}$ such that $x$ is isolated, we have $\eta(xyz) = \emptyset$.

Therefore, if $x$ is an isolated vertex of $H$ such that $\eta(x) = \emptyset$ then after removing $x$ from $H$, what we obtain is again an extended strip decomposition of $G$ with the same properties.
By iteratively removing such vertices, we eventually get a rigid relaxed extended strip decomposition whose every particle has weight at most $w$.

We now show how to convert this rigid relaxed extended strip decomposition $(H, \eta)$ into a rigid extended strip decomposition whose every particle has weight at most $w$.
If there is no $xyz \in \binom{V(H)}{3}$ such that $\eta(xyz) \neq \emptyset$ and $xyz \notin T(H)$ then restricting $\eta$ to $V(H) \cup E(H) \cup T(H)$ gives the desired rigid extended strip decomposition. Suppose now that there exists a triple $xyz \in \binom{V(H)}{3}$ such that $\eta(xyz) \neq \emptyset$ and $xyz \notin T(H)$. We consider several cases. \begin{itemize}
    \item If $\eta(xyz)$ is not adjacent to $V(G) - \eta(xyz)$, let $H'$ be the graph obtained from $H$ by adding an isolated vertex $v$. For every $o \in V(H) \cup E(H) \cup \binom{V(H)}{3} \setminus \{xyz\}$, set $\eta'(o) = \eta(o)$, and set $\eta'(v) = \eta(xyz)$, $\eta'(xyz) = \emptyset$ and $\eta'(vu_1u_2) = \emptyset$ for every newly created triple of vertices. 
    \item If $\eta(xyz)$ is adjacent to only one of $\{\eta(xy), \eta(xz), \eta(yz)\}$, say $\eta(xy)$, let $H' = H$ and for every $o \in V(H) \cup E(H) \cup \binom{V(H)}{3} \setminus \{xyz, xy\}$, set $\eta'(o) = \eta(o)$, and set $\eta'(xy) = \eta(xy) \cup \eta(xyz)$ and $\eta'(xyz) = \emptyset$. Importantly, we set $\eta'(xy, x) = \eta(xy, x)$ and $\eta'(xy, y) = \eta(xy, y)$. 
    \item Otherwise, since $xyz \notin T(H)$, only two of $\{xy, xz, yz\}$ are edges of $H$, say $xy$ and $xz$. Then, $\eta(xyz)$ is adjacent to both $\eta(xy)$ and $\eta(xz)$. Again, we set $H' = H$, for every $o \in V(H) \cup E(H) \cup \binom{V(H)}{3} \setminus \{xyz, x\}$, set $\eta'(o) = \eta(o)$, and set $\eta'(x) = \eta(x) \cup \eta(xyz)$ and $\eta'(xyz) = \emptyset$.
\end{itemize}

In each case, $(H', \eta')$ is a rigid relaxed extended strip decomposition of $G$ and the number of triples $xyz \in \binom{V(H)}{3}$ such that $\eta(xyz) \neq \emptyset$ and $xyz \notin T(H)$ decreases strictly. Thus, by iterating this operation we eventually obtain a rigid relaxed extended strip decomposition of $G$ with no $xyz \in \binom{V(H)}{3}$ such that $\eta(xyz) \neq \emptyset$ and $xyz \notin T(H)$. If this relaxed extended strip decomposition has no particle of weight strictly greater than $w$ then as before we can turn it into the desired rigid extended strip decomposition of $G$.

Therefore, we can assume that one of the above operations created a particle of weight strictly greater than $w$.
Note that in the first case, we do not increase the weight of any pre-existing particle, and the only non-empty particle we create is $A_v = \eta'(v) = \eta(xyz)$ which has weight at most $w$ by assumption.
Similarly, in the second case, we do not increase the weight of any particle.
Therefore, we created a particle of weight strictly greater than $w$ going from some rigid relaxed extended strip decomposition $(H, \eta)$ to another rigid relaxed extended strip decomposition $(H', \eta')$ using the third case.
In this case, all particles keep the same weight except for $A_{xyz}$ which becomes empty, $A_x$, and the full edge particles $A_{xa}^{xa}$ ($a \notin \{y, z\}$). 
Since after the modification we have $A_x \subseteq A_{xy}^{xy}$ and the weight of $A_{xy}^{xy}$ did not change then after the modification we still have $\wei(A_x) \leq w$.
Thus, there exists a full-edge particle $A' = A_{xa}^{xa}$ of $(H', \eta')$ such that $\wei(A') > w$, and $a \notin \{y, z\}$.

Since $(H, \eta)$ is rigid, as in~\cref{lem:small-dom-ngbhd} there exists a set $X_{A'}$ of size at most 2 such that $N_G(A') \subseteq N_G(X_{A'})$. Therefore, in $G - N_G[X_{A'}]$, the remaining vertices of $A'$ are disconnected from $V - A'$.
Furthermore, $V - A'$ has weight at most $\wei(V) - \wei(A') \leq w$.
Let $A = A_{xa}^{xa}$ be the corresponding particle of $(H, \eta)$. 
Then, $A' = A \cup \eta(xyz)$ but since $a \notin \{y, z\}$ then $\eta(xyz)$ is not adjacent to $A$ in $G$. 
Hence, for every connected component $C$ of $G - N_G[X_{A'}]$, we have either $V(C) \subseteq V - A'$ or $V(C) \subseteq \eta(xyz)$ or $V(C) \subseteq A$. In all three cases we have $\wei(V(C)) \leq w$ since $(H, \eta)$ is refined.

Finally, note that this proof is algorithmic and can be implemented in polynomial time (in the sizes of $H$ and $G$). Indeed, every modification we perform on the relaxed extended strip decomposition can be done in polynomial time, and we can find the next one in polynomial time.
Therefore, we just need to argue that the size of the relaxed extended strip decomposition remains within a polynomial factor of the size of the initial \esd.
First, we perform a polynomial number of modifications to ensure that no edge has an empty interface, and each such modification adds at most one vertex. Then, to ensure that there is no triple $xyz$ such that $x$ is isolated and $\eta(xyz) \neq \emptyset$ we again perform a polynomial number of modifications, each adding at most one vertex. Then, we remove isolated vertices, which cannot increase the size of the relaxed \esd. Finally, we perform a polynomial (in the size of the current relaxed \esd) number of operations to turn the relaxed \esd into a proper \esd.
\end{proof}

We now look at induced paths in $G$ whose endpoints are peripheral in $(H, \eta)$, with particular emphasis on where their vertices appear in $(H,\eta)$. 
The following lemmata characterize how an induced path (in $G$) between peripheral vertices can traverse an extended strip decomposition. 

\begin{lemma} \label{lem:enter-particle}
    Let $(H, \eta)$ be an extended strip decomposition of a graph $G$ and $z \in V(G)$ be a peripheral vertex in $(H, \eta)$. Let $Q = (z = x_0, x_1, \ldots, x_{k-1}, x_k)$ be an induced path in $G$ with endpoint $z$.
    Let $A = A^{pq}_{pq}$ be a full edge particle such that $z \notin \eta(pq)$. If $V(Q) \cap N[A] \neq \emptyset$, there exists an edge $f \neq pq$ and $r \in \{p, q\}$, $r \in f$ such that $\eta(f, r) \cap V(Q) \neq \emptyset$.
\end{lemma}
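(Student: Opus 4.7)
The plan is to walk along $Q$ and show that the \emph{first} time it enters $N[A]$, it lands inside one of the sought interfaces. The two ingredients are (i) an observation that $z$ itself cannot be in $N[A]$, and (ii) a short case analysis, powered by property 3 of \esd, that classifies the ways a vertex outside $A$ can be adjacent to a vertex inside $A$.

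First, I would unpack what ``peripheral'' says about $z$. By definition, there is a degree-one vertex $x$ of $H$ with unique neighbor $y$ in $H$, and $\eta(xy,x)=\{z\}$; after the standard observation recorded right after the definition of peripheral, we may assume $\eta(x)=\emptyset$. In particular, $z\in \eta(xy)$, and since $\eta$ is a partition and $z\notin \eta(pq)$ by hypothesis, we have $xy\neq pq$, whence $\eta(xy)\cap A=\emptyset$ and $z\notin A$. Now I would enumerate the possible edges incident with $z$ in $G$ using property 3 of an \esd: the ``interface--interface'' case (a) needs a second edge of $H$ at $x$, impossible since $\deg_H(x)=1$; the ``interface--vertex particle'' case (b) needs a non-empty $\eta(x)$, which we have arranged to be empty; and the ``triangle--double interface'' case (c) would force $x$ to lie in a triangle of $H$, again impossible since $\deg_H(x)=1$. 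Hence every neighbor of $z$ in $G$ lies in $\eta(xy)$, and therefore outside $A$. So $z\notin N[A]$.

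Second, let $i$ be the smallest index with $x_i\in N[A]$; by hypothesis it exists, and by the previous paragraph $i\geq 1$. If $x_i$ were in $A$ itself then its predecessor $x_{i-1}$ would be adjacent to $A$ and thus lie in $N[A]$, contradicting the minimality of $i$. So $x_i\notin A$ but has some neighbor $u\in A$.

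Third, I would do a case analysis on which ``sub-piece'' of $A=\eta(p)\cup\eta(q)\cup\eta(pq)\cup\bigcup_{s:pqs\in T(H)}\eta(pqs)$ contains $u$, using property 3 of the \esd to list the only ways an edge can cross from $A$ to its complement. If $u\in\eta(p)$, only case (b) applies and forces $x_i\in\eta(pw,p)$ for some edge $pw\in E(H)$; $pw=pq$ would put $x_i$ inside $A$, so $pw\neq pq$, and we are done with $f=pw$, $r=p$. Symmetrically for $u\in\eta(q)$. If $u\in\eta(pq)$, then $u$ cannot lie in the ``edge interior'' $\eta(pq)\setminus(\eta(pq,p)\cup\eta(pq,q))$, since such a vertex has all its $G$-neighbors inside $\eta(pq)\subseteq A$; so, say, $u\in\eta(pq,p)$, and the admissible patterns (a), (b), (c) either place $x_i$ in $\eta(p)$ or $\eta(pqs)$, both subsets of $A$, or else in $\eta(pw,p)$ with $pw\neq pq$, which is the desired conclusion (symmetrically for $u\in\eta(pq,q)$). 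Finally, if $u\in\eta(pqs)$ for some triangle $pqs$, only pattern (c) applies, and $x_i$ must lie in one of $\eta(pq,p)\cap\eta(pq,q)$, $\eta(ps,p)\cap\eta(ps,s)$, or $\eta(qs,q)\cap\eta(qs,s)$; the first option lies in $A$ and is ruled out, and the remaining two give $x_i\in\eta(ps,p)$ or $x_i\in\eta(qs,q)$ with $ps\neq pq$ and $qs\neq pq$ (since $s\notin\{p,q\}$), again matching the conclusion.

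The only mild obstacle is making the case analysis in the third paragraph airtight: for each location of $u$, one must rule out that the partner $x_i$ lands inside $A$ (otherwise one learns nothing, and indeed would contradict $x_i\notin A$) and verify that the surviving cases always place $x_i$ in an interface $\eta(f,r)$ with $f\neq pq$ and $r\in\{p,q\}\cap f$. Once this bookkeeping is done, $x_i$ itself witnesses the conclusion.
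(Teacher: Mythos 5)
There is a genuine gap in your second paragraph: the claim that $z \notin N[A]$ is not justified and is in fact false in general. The definition of peripheral only guarantees $\eta(xy,x)=\{z\}$ for some degree-one vertex $x$ of $H$ with unique neighbour $y$; it does not prevent $z$ from also lying in $\eta(xy,y)$. In that situation $z$ is complete to $\eta(yw,y)$ for every other edge $yw$ of $H$ incident to $y$ and may have neighbours in $\eta(y)$, and nothing forbids $y\in\{p,q\}$ (e.g.\ $yw=pq$); then these sets are contained in $N[A]$ or in $A$ itself, so $z$ can perfectly well belong to $N(A)$. Your enumeration of the edges of $G$ incident with $z$ only treated the adjacency patterns attached to the endpoint $x$ of the edge $xy$ and silently dropped those attached to $y$. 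Consequently the deduction $i\geq 1$ is unfounded, and for $i=0$ your next step (``the predecessor $x_{i-1}$ would be adjacent to $A$'') has nothing to point to.

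The damage is local and easily repaired, because you did correctly establish the weaker fact $z\notin A$ (from $z\in\eta(xy)$ with $xy\neq pq$ and the partition property). If $i=0$ you therefore already know $x_0=z\notin A$ without invoking a predecessor, and your third-paragraph case analysis --- which is sound, and amounts to a from-scratch proof of the standard fact that $N(A^{pq}_{pq})$ is contained in the union of the interfaces $\eta(f,r)$ with $f\neq pq$, $r\in\{p,q\}$ and $r\in f$ --- applies verbatim to $x_0$ and produces the required witness; concretely $f=xy$ and $r=y$ work. This is exactly how the paper handles the case $i=0$ (it does not claim $z\notin N[A]$), and apart from this point your argument coincides with the paper's.
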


\begin{proof}
    Since $z \notin \eta(pq)$ and $z$ is peripheral in $(H, \eta)$ then $z \notin A$.
    Suppose $V(Q) \cap N[A] \neq \emptyset$ and let $i$ be minimum such that $x_i \in N[A]$.
    
    If $i = 0$, then $z \in N[A]$. Let $f$ be the unique edge of $H$ such that $z \in \eta(f)$. Then, $f \neq pq$ by assumption. Furthermore, since $z \in N[A] \setminus A = N(A)$, there exists $r \in \{p, q\}$ such that $z \in \eta(f, r)$.
    
    Otherwise, by minimality of $i$, we have $x_{i-1} \notin N[A] \Rightarrow x_i \notin A$. Thus, $x_i \in N[A] \setminus A = N(A)$ so there exists an edge $f$ and $r \in \{p, q\}$ such that $x_i \in \eta(f, r)$. Since $x_i \notin A$ then $f \neq pq$, which concludes the proof.
\end{proof}

\begin{lemma} \label{lem:gyarfas-path-inside-edges}
Let $(H, \eta)$ be an extended strip decomposition of a graph $G$ and $x, y \in V(G)$ be two distinct peripheral vertices in $(H, \eta)$. Let $Q = (x = x_0, x_1, \ldots, x_{k-1}, x_k = y)$ be an induced path in $G$ with endpoints $x$ and $y$.
Then, for every edge $e = ab \in E(H)$, it holds that $|V(Q) \cap \eta(e, a)| \leq 1$, and $V(Q) \subseteq \bigcup_{e \in E(H)} \eta(e)$.
\end{lemma}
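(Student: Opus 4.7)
The plan is to prove (a) ``$V(Q) \subseteq \bigcup_{e \in E(H)} \eta(e)$'' and (b) ``each interface $\eta(e,a)$ contains at most one vertex of $Q$'' together, by tracing where consecutive vertices of $Q$ can lie inside the cells of $(H, \eta)$ and exploiting the edge rules of an \esd together with the inducedness of $Q$.

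First I would pin down the peripheral endpoints. Since $x = x_0 \in \eta(pq, p)$ with $p$ a degree-one vertex of $H$, we may assume $\eta(p) = \emptyset$; also $p$ has no neighbor in $H$ other than $q$ and $p$ belongs to no triangle of $H$. Inspecting the three categories of edges permitted by the definition, every $G$-neighbor of $x$ must then lie in $\eta(pq)$. Hence $x_1 \in \eta(pq)$, and symmetrically $x_{k-1} \in \eta(p'q')$. I would then catalog the admissible transitions $x_i \to x_{i+1}$: the edges of $G$ are classified by the definition, so knowing which cell and sub-cell (interface, interior, both interfaces, vertex particle, triangle particle) $x_i$ occupies determines the finite list of possible cells for $x_{i+1}$. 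The crucial input is rule~2: the interfaces $\eta(ab, a)$ and $\eta(ac, a)$ at a common $H$-vertex $a$ (with $b \neq c$) are completely adjacent in $G$, so any two non-consecutive vertices of $Q$ sitting in distinct interfaces at the same $a$ would create a chord.

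To prove (b), I would take a minimum counterexample: indices $i < j$ with $x_i, x_j \in \eta(ab, a)$ minimizing $j - i$. Minimality gives that no $x_m$ with $i < m < j$ lies in $\eta(ab, a)$, and in fact no interface other than $\eta(ab, a)$ contains two of $x_i, \ldots, x_j$. A short case analysis on the cell of $x_{i+1}$ (and symmetrically of $x_{j-1}$) then follows: if $x_{i+1} \in \eta(ac, a)$ with $c \neq b$, then rule~2 makes $x_{i+1}$ adjacent to $x_j$, which by inducedness forces $j = i + 2$, and a further inspection of $x_{i+1}$'s admissible transitions into $x_j$ produces either a chord or a smaller counterexample; if $x_{i+1} \in \eta(ab) \setminus \eta(ab, a)$, continuing the trace shows that the path must eventually exit $\eta(ab)$ through an interface at $a$ or at $b$, reducing to the previous case; finally, if $x_{i+1}$ lies in $\eta(a)$ or in a triangle particle $\eta(abc)$, tracking the excursion and exiting via some interface $\eta(ac', a)$ gives a chord with $x_i$ or $x_j$ (when $c' \neq b$, via rule~2) or places another pair in a common interface contradicting minimality (when $c' = b$).

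Assertion (a) then follows by appealing to (b). Supposing some $x_m$ lies in a vertex particle $\eta(a)$ or a triangle particle $\eta(abc)$, I would take the maximal subinterval $x_i, \ldots, x_j$ of $Q$ contained in $\bigcup_{v \in V(H)} \eta(v) \cup \bigcup_{uvw \in T(H)} \eta(uvw)$. Then $x_{i-1}$ and $x_{j+1}$ lie in edge particles, and by the transition catalog they must occupy interfaces incident to the common $H$-vertex (or to the triangle) of the excursion. If those interfaces differ, rule~2 produces a chord $x_{i-1} x_{j+1}$; if they coincide, they violate (b), already proved. The main obstacle I expect is the ``same-interface bump'' case in the proof of (b), in which $Q$ leaves $\eta(ab, a)$ into $\eta(a)$ or into a triangle particle and returns to $\eta(ab, a)$: rule~2 does not deliver a chord immediately, so the contradiction must be extracted carefully, combining the minimality of the counterexample with a case-by-case analysis of the admissible transitions inside and out of $\eta(a)$ or of the triangle particle.
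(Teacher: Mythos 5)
Your reduction of the containment statement to the interface bound is sound and matches the paper: an excursion of $Q$ into a vertex particle $\eta(v)$ (or a triangle particle) must enter and leave through interfaces at the same $H$-vertex, and inducedness plus property~2 of the definition forces these to be the \emph{same} interface, which then carries two vertices of $Q$. The gap is in your proof of the interface bound itself. Your minimal-counterexample scheme (minimize $j-i$ over pairs $x_i,x_j\in\eta(ab,a)$) and the case analysis on the cell of $x_{i+1}$ cannot close, for two reasons. First, the case $x_{i+1}\in\eta(ab,a)$, i.e.\ $j=i+1$, is not among your listed cases, and no \emph{local} contradiction exists there: two consecutive path vertices in one interface violate nothing in the definition by themselves. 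Second, and more fundamentally, in the cases where the segment $x_i,\dots,x_j$ stays inside the half-edge particle $\eta(a)\cup\eta(ab)\setminus\eta(ab,b)$ (e.g.\ $x_i\in\eta(ab,a)$, $x_{i+1},\dots,x_{j-1}\in\eta(a)$, $x_j\in\eta(ab,a)$, or your case $j=i+2$ with $x_{i+1}\in\eta(ac,a)$), there is again no chord inside the interval, and the ``other pair'' your trace produces lands in $\eta(ab,b)$ at positions \emph{at least as far apart} as $j-i$, so minimality of $j-i$ gives you nothing. Your phrases ``produces either a chord or a smaller counterexample'' are exactly where the argument fails to materialize.

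The missing idea is global rather than local: one must use that the peripheral endpoint $y$ lies outside every half-edge and full-edge particle, so the path is forced to \emph{exit} these particles, and one must control where it exits. Concretely, the paper shows that if $\eta(uv,u)$ contains two vertices of $Q$, then following $Q$ from its last occurrence in $\eta(uv,u)$ until it first leaves the half-edge particle $A^u_{uv}$, the exit vertex lies in $N(A^u_{uv})\subseteq \eta(uv,v)\cup\bigcup_{w\neq v}\eta(uw,u)$; the second option yields a chord with the first occurrence via property~2, so the exit extends the range of $\eta(uv,v)$. Symmetrizing in $u,v$ and in first/last occurrences gives that the two extremal occurrences lie in $\eta(uv,u)\cap\eta(uv,v)$. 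Then one exits the \emph{full}-edge particle, whose outside neighborhood is complete to $\eta(uv,u)\cap\eta(uv,v)$, producing the chord. You should replace the minimal-counterexample case analysis with this escape argument (or an equivalent one); as it stands the central cases of your proof are unproved and, in the form you reach for, unprovable.
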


\begin{proof}
First, we prove that if there exists $v \in V(H)$ and $x_i \in V(Q)$ such that $x_i \in \eta(v)$, then there exist $p < i < q$ and an edge $e$ incident to $v$ such that $x_p, x_q \in \eta(e, v)$.
Indeed, suppose that $x_i \in V(Q) \cap \eta(v)$ for some $v \in V(H)$. Let $p < i$ be maximum such that $x_p \notin \eta(v)$ (this is well-defined since $x_0 = x \notin \eta(v)$ as $x$ is peripheral in $(H, \eta)$). 
Observe that $x_{p+1} \in \eta(v)$. Since $x_px_{p+1} \in E(G)$, there exists an edge $e \in E(H)$ incident to $v$ such that $x_p \in \eta(e, v)$. 
Similarly, there exists $q > i$ and an edge $f \in E(H)$ incident to $v$ such that $x_q \in \eta(f, v)$.
Since $Q$ is an induced path and $q > p + 1$, there is no edge $x_px_q$ in $G$ and thus $e = f$. Thus, $x_p, x_q \in \eta(e, v)$.
With the same argument, it also holds that if there exists $t \in T(H)$ and $x_i \in V(Q)$ such that $x_i \in \eta(t)$ then there exist $p < i < q$, an edge $e \in E(t)$ and a vertex $v \in V(t)$ such that $x_p, x_q \in \eta(e, v)$.
Therefore, it suffices to prove that $|V(Q) \cap \eta(e, a)| \leq 1$ for every edge $e = ab \in E(H)$ to prove that $V(Q) \subseteq \bigcup_{e \in E(H)} \eta(e)$.

For $uv \in E(H)$, let $L(uv, u)$ and $R(uv, u)$ be minimum and maximum values of $i$ such that $x_i \in \eta(uv, u)$, or $\perp$ if no such values exist.  We claim that if $|V(Q) \cap \eta(uv, u)| \ge 2$, then $L(uv, v) \le L(uv, u), R(uv, u) \le R(uv, v)$ (and, in particular, $|V(Q) \cap \eta(uv, v)| \ge 2$) and prove that as follows. Let $L(uv, u) = i < j = R(uv, u)$. If $x_j \in \eta(uv, v)$, then $R(uv, v) \ge j = R(uv, u)$. If $x_j \not\in \eta(uv, v)$, then let us consider the set $A = \eta(u) \cup \eta(uv) \setminus \eta(uv, v)$ and the smallest $j'>j$ such that $x_{j'} \not\in A$. Such $j'$ is well defined as $y \not\in A$ since $|\eta(uv, u)| \ge 2$. If $j' > j+1$, then $x_{j'-1} \in A$ from the minimality of $j'$, whereas if $j'=j+1$, then $x_{j'-1}=x_j \in A$. So, in any case $x_{j'-1} \in A$ and we deduce that $x_{j'} \in N(A) \subseteq \eta(uv, v) \cup \bigcup_{uw \in E(H), w \neq v} \eta(uw, u)$. However, if $x_{j'} \in \eta(uw, u)$ for some $w \neq v$, then $x_ix_{j'} \in E(G)$, which is a contradiction as $Q$ is an induced path and $j'>j>i$. Hence, $x_{j'} \in \eta(uv, v) \Rightarrow R(uv, v) \ge j' > R(uv, u)$. In either case, we have that $R(uv, v) \ge R(uv, u)$. Similarly, we conclude $L(uv, v) \le L(uv, u)$, which proves our auxiliary claim. However, as $|V(Q) \cap \eta(uv, v)| \ge 2$, the roles of $u$ and $v$ can be reversed to show that $L(uv, u) \le L(uv, v)$ and $R(uv, v) \le R(uv, u)$, hence we conclude that $L(uv, u) = L(uv, v)$ and $R(uv, u) = R(uv, v)$.

By contradiction, suppose that there exists an edge $e = ab \in E(H)$ such that $|V(Q) \cap \eta(e, a)| \geq 2$. Let $L(ab, b) = i$ and $R(ab, b) = j$. We have that $i<j$ and $L(ab, a)=i$, $R(ab, a) = j$, hence $x_i, x_j \in \eta(ab, a) \cap \eta(ab, b)$. Let us consider the set $A = \eta(a) \cup \eta(b) \cup \eta(ab) \cup \bigcup_{abc \in T(H)} \eta(abc)$ and the smallest $j'>j$ such that $x_{j'} \not\in A$. Such $j'$ is well defined as $|\eta(ab, a)|, |\eta(ab, b)| \ge 2$, so $y \not\in A$. We have that $x_{j'} \in N(A)$, however $N(A) \subseteq \bigcup_{ac \in E(H), c \neq b} \eta(ac, a) \cup \bigcup_{bc \in E(H), c \neq a} \eta(bc, b) $, therefore $N(A)$ is complete to $\eta(ab, a) \cap \eta(ab, b)$. Hence $x_ix_{j'} \in E(H)$, which is a contradiction since $Q$ is an induced path and $j'>j>i$, which concludes the proof. 
\end{proof}

\begin{corollary} \label{cor:path-inside-edges-internal}
Let $(H, \eta)$ be an extended strip decomposition of a graph $G$ and $x, y \in V(G)$ be two distinct peripheral vertices in $(H, \eta)$. Let $Q = (x = x_0, x_1, \ldots, x_{k-1}, x_k = y)$ be an induced path in $G$ with endpoints $x$ and $y$.
For every $e = ab \in E(H)$ such that $V(Q) \cap \eta(e) \neq \emptyset$, $G[V(Q) \cap \eta(e)]$ is a path with endpoints in $\eta(e, a)$ and $\eta(e, b)$ and all internal vertices in $\eta(e) \setminus (\eta(e, a) \cup \eta(e, b))$.
\end{corollary}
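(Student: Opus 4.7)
The plan is to leverage Lemma~\ref{lem:gyarfas-path-inside-edges}, which tells us $V(Q) \subseteq \bigcup_{e \in E(H)} \eta(e)$ and $|V(Q) \cap \eta(e,v)| \leq 1$ for every edge $e = uv$ and endpoint $v$. Fixing $e = ab \in E(H)$ with $V(Q) \cap \eta(e) \neq \emptyset$, I view $V(Q) \cap \eta(e)$ as a union of maximal runs of consecutive indices in $Q$. Since $Q$ is induced, the induced subgraph $G[V(Q) \cap \eta(e)]$ is automatically a disjoint union of sub-paths, one per run. The goal reduces to showing there is exactly one such run, that its first and last vertices lie in $\eta(e,a)$ and $\eta(e,b)$ respectively (or in $\eta(e,a) \cap \eta(e,b)$ for a singleton run), and that its internal vertices avoid both interfaces.

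The first step is to show that the first and last vertices $x_{i_0}, x_{j_0}$ of any run lie in $\eta(e,a) \cup \eta(e,b)$. If $i_0 > 0$ then the edge $x_{i_0-1}x_{i_0}$ crosses from outside $\eta(e)$ into $\eta(e)$. Inspecting the three ``cross'' edge types in the definition of an extended strip decomposition, and ruling out the vertex-particle and triangle-particle types via the guarantee $V(Q) \subseteq \bigcup_e \eta(e)$, I conclude that only the ``shared-vertex interface'' type applies, forcing $x_{i_0}$ into an interface of $e$. If $i_0 = 0$ then $x_{i_0} = x$ is peripheral in $(H,\eta)$, and since $x \in \eta(e)$, peripherality places $x$ in $\eta(e,v_0)$ for the degree-one vertex $v_0 \in \{a,b\}$. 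The same reasoning applies to $x_{j_0}$.

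Next, I argue that each run ``uses'' both interfaces. For a run of length at least two, $x_{i_0}$ and $x_{j_0}$ are distinct interface vertices; they cannot both lie in the same interface (that would violate the $\leq 1$ bound), so they split across $\eta(e,a)$ and $\eta(e,b)$. For an internal single-vertex run ($i_0 = j_0$ with $0 < i_0 < k$), both cross-edges $x_{i_0-1}x_{i_0}$ and $x_{i_0}x_{i_0+1}$ are of the ``shared-vertex interface'' type. If both use the same shared vertex (say $a$), then $x_{i_0-1}$ and $x_{i_0+1}$ lie in $\eta(e_1,a)$ and $\eta(e_2,a)$ for some $e_1, e_2 \neq e$ incident to $a$; the case $e_1 = e_2$ violates the interface bound at $e_1$, while $e_1 \neq e_2$ gives $x_{i_0-1}x_{i_0+1} \in E(G)$ by the completeness property across interfaces at $a$, contradicting $Q$ being induced. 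Hence the two crossings use different shared vertices, placing $x_{i_0}$ in $\eta(e,a) \cap \eta(e,b)$. An endpoint single-vertex run (say $i_0 = j_0 = 0$ with $k \geq 1$) is analogous: the cross-edge $x_0 x_1$ cannot use the degree-one endpoint $v_0$ of $e$ as its shared vertex (no other edge is incident to $v_0$), so it must use the other endpoint of $e$, placing $x_0$ in the second interface as well.

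Combining the previous steps, each run contributes at least one vertex to $\eta(e,a)$ and at least one to $\eta(e,b)$. Since each interface admits at most one vertex of $Q$, there can be at most one run, so $G[V(Q) \cap \eta(e)]$ is a single path, and its endpoints lie in the required interfaces. For the internal-vertex claim: if some $x_m$ with $i_0 < m < j_0$ were in, say, $\eta(e,a)$, then together with the endpoint of the run already lying in $\eta(e,a)$, we would have two vertices of $Q$ in that interface---contradicting the bound from Lemma~\ref{lem:gyarfas-path-inside-edges}. The main delicacy is the single-vertex-run case, where I have to combine the interface bound with the completeness property across interfaces sharing a vertex, and to separately handle peripheral endpoints via the degree-one constraint; the multi-vertex case and the internal-vertex deduction are nearly immediate from the lemma.
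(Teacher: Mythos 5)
Your proof is correct and rests on the same ingredients as the paper's own argument: the two conclusions of \cref{lem:gyarfas-path-inside-edges}, the completeness of interfaces sharing a vertex of $H$ (used to produce a chord of $Q$ in the singleton-run case), and peripherality plus the degree-one constraint for the endpoints $x$ and $y$. The only difference is organizational --- you decompose $V(Q) \cap \eta(e)$ into maximal runs and count interface usages to conclude there is a single run, whereas the paper takes the minimal and maximal indices landing in $\eta(e)$ and uses a separation argument to show the path cannot leave $\eta(e)$ in between --- but the case analysis and the contradictions derived are essentially identical.
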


\begin{proof}
Since $x, y \in V(G)$ are peripheral in $(H, \eta)$, there exist edges $e_x = u_xv_x, e_y = u_yv_y \in E(H)$ with $v_x, v_y$ being degree-one vertices, such that $\eta(e_x, v_x) = \{x\}$ and $\eta(e_y, v_y) = \{y\}$.

Let $G_E = G[\bigcup_{e \in E(H)} \eta(e)]$. By \cref{lem:gyarfas-path-inside-edges} we have that $Q \subseteq G_E$.

Let $e = ab \in E(H)$ such that $V(Q) \cap \eta(e) \neq \emptyset$. Let $i$ be minimum such that $x_i \in \eta(e)$ and $j$ be maximum such that $x_j \in \eta(e)$.
If $i = 0$ then $x_i = x$ so $e = e_x$ and up to renaming $a$ and $b$ we have $x_i \in \eta(e, a)$.
If $i > 0$ then $x_{i-1} \notin \eta(e)$ and $x_{i-1}x_i \in E(G)$ so up to renaming $a$ and $b$ we have $x_i \in \eta(e, a)$. Similarly, we have that $x_j \in \eta(e, a) \cup \eta(e, b)$.

Let us first consider the cases where $i=j$.
\begin{itemize}
    \item If $i = j = 0$. Then, $e = e_x = u_xv_x$ and $x_i = x \in \eta(e_x, v_x)$. Furthermore, $x_{i+1} = x_{j+1} \notin \eta(e_x)$ by maximality of $j$, and $v_x$ has degree 1 in $H$ and $\eta(v_x) = \emptyset$ since $x$ is peripheral so $x \in \eta(e, u_x)$.
    \item If $i = j = k$. This is similar to the previous case.

    \item If $0 < i = j < k$. In this case we need to prove that $x_i \in \eta(e, b)$. Let us assume by contradiction that this is false. We have that $x_{i-1}, x_{i+1} \in N(x_i) \cap V(G_E) - \eta(e)$. As $x_i \in \eta(e, a) - \eta(e, b)$, we conclude that $x_{i-1}, x_{i+1} \in \bigcup_{ac \in E(H), c \neq b} \eta(ac, a)$. Let us write then that $x_{i-1} \in \eta(ac_1, a)$ and $x_{i+1} \in \eta(ac_2, a)$. If $c_1=c_2$, then $|V(Q) \cap \eta(ac_1, a)| \ge 2$, which is a contradiction with \cref{lem:gyarfas-path-inside-edges}. If $c_1 \neq c_2$, then $x_{i-1}x_{i+1} \in E(G)$, which is a contradiction with $Q$ being an induced path.
\end{itemize}
From now on, let us assume that $i<j$. Since $|V(Q) \cap \eta(e, a)|, |V(Q) \cap \eta(e, b)| \le 1$ by \cref{lem:gyarfas-path-inside-edges}, we have that $x_i \in \eta(e, a) - \eta(e, b)$, $x_j \in \eta(e, b) - \eta(e, a)$ and no other vertices of $Q$ belong to $\eta(e, a) \cup \eta(e, b)$. Moreover, $\eta(e, a) \cup \eta(e, b)$ separates $\eta(e) - (\eta(e, a) \cup \eta(e, b))$ from $V(G_E) - \eta(e)$, so proving that $x_{i+1} \in \eta(e)$ would imply that all internal vertices of the path $x_i x_{i+1} \ldots x_{j-1} x_j$ belong to $\eta(e) - (\eta(e, a) \cup \eta(e, b))$, as desired, so this is what we are going to prove in the remaining cases.

\begin{itemize}
    \item If $i = 0 < j$. Then $e = e_x = u_xv_x$ and $x_i = x \in \eta(e_x, v_x)$, so we have that $a=v_x, b=u_x$. We recall $x_i \in \eta(e, a) - \eta(e, b)$ and $x_j \in \eta(e, b) - \eta(e, a)$.
    As $a$ is a degree one vertex and $x_0 \notin \eta(e, b)$, we clearly have that $x_1 \in \eta(e)$, which proves our claim.
    
    \item If $i < j = k$. This is similar to the previous case.
    
    \item If $0 < i < j < k$. We recall that $x_i \in \eta(e, a) \setminus \eta(e, b)$ and $x_j \in \eta(e, b) \setminus \eta(e, a)$. 
    As $x_i \in \eta(e, a) - \eta(e, b)$, we can use the same reasoning as in the $0<i=j<k$ case to argue that $x_{i+1} \in \eta(e)$ (which operated under the same assumption that $0<i$ and $x_i \in \eta(e, a) - \eta(e, b)$, but excluded the possibility that $x_{i+1} \in \eta(e)$. That again completes the argument.
    \qedhere
\end{itemize}
\end{proof}

We can now prove \cref{thm:main-thm}.
\begin{proof}
    Let $Q$ be a minimal induced path in $G$ given by~\cref{thm:gyarfas}, that is, a path such that every connected component of $G - N[V(Q)]$ has weight at most  $\wei(V)/2$ and $Q$ is minimal with this property.
    If $|V(Q)| \leq 3t+11$, set $\mathcal{S} = V(Q)$ and observe that every connected component of $G - N[\mathcal{S}]$ has weight at most $\wei(V)/2$. Thus, $G - N[\mathcal{S}]$ has a trivial rigid extended strip decomposition whose every particle has weight at most $\wei(V)/2$.
    Assume now that $|V(Q)| > 3t+11$.
    Let $\ell$ be the last vertex of $Q$.
    Let $Q_2$ be the subpath of $Q$ induced by the last $t+2$ vertices of $Q$, and denote by $z$ the first vertex of $Q_2$.
    Let $z'$ be the predecessor of $z$ in $Q$, and $y$ the predecessor of $z'$ in $Q$. Let $x$ be the first vertex of $Q$, and $Q_1$ be the subpath of $Q$ from $x$ to $y$. Observe that $|V(Q_1)| > 2t + 8$.
      Let $Q_x$, $Q_y$, and $Q_z$ be the sets containing the first $t+1$ successors of $x$, predecessors of $y$, and successors of $z$ in $Q$, respectively (including $x$, $y$, $z$) in $Q$.
    \begin{figure}
        \centering
        \includegraphics[width=1\textwidth]{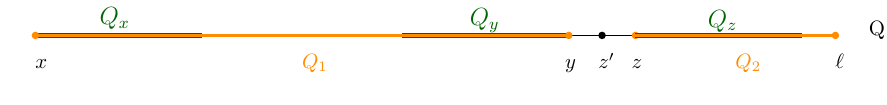}
        \caption{Notations from the proof of \cref{thm:main-thm}. $|Q_x|=|Q_y|=|Q_z|=t+1$. }
        \label{fig:line-segments}
    \end{figure}

    Let $Y = Q_x \cup Q_y \cup Q_z$. Then, $|Y| = 3t+3$.
    Consider the graph $G' = G - (N(Y) - V(Q_1) - V(Q_2))$.
    Note that every vertex of $Q$ is a vertex of $G'$, except $z'$.
    Let $Z = \{x, y, z\}$.
    By \cref{thm:three-in-a-tree}, we can find in polynomial time either an induced subtree of $G'$ containing all three elements of $Z$, or a rigid extended strip decomposition of $(G', Z)$.
    However, any induced subtree of $G'$ containing all three elements of $Z$ contains an induced $S_{t, t, t}$ since it must contain the three pairwise non-touching induced paths $G[Q_x], G[Q_y], G[Q_z]$, each containing $t$ edges.
    Therefore, we can assume that we find a rigid extended strip decomposition $(H, \eta)$ of $(G', Z)$.
    Note that the size of $H$ is polynomial in the size of $G$.
    Suppose first that every particle of $(H, \eta)$ has weight at most $\wei(V)/2$. Applying \cref{lem:make-rigid} to $G - N_G[Y]$ and the restriction of $(H, \eta)$, with $w = \wei(V)/2$, we can find in polynomial time either a rigid extended strip decomposition of $G - N_G[Y]$ whose every particle has weight at most $\wei(V)/2$, in which case we are done setting $\mathcal{S} = Y$, or a set $X \subseteq V(G - N_G[Y])$ of size at most $2$ such that every connected component of $(G-N_G[Y]) - N_{G - N_G[Y]}[X] = G-N_G[Y \cup X]$ has weight at most $\wei(V)/2$, in which case we are done setting $\mathcal{S} = Y \cup X$. 
    
    Suppose now that there exists a particle $A$ such that $\wei(A) > \wei(V)/2$.
    Without loss of generality, we can assume that whenever $p \in V(H)$ is isolated then $G[\eta(p)]$ is connected.
    Consider such a $p \in V(H)$.
    Since $Q$ is a path between peripheral vertices then $N_G[V(Q)] \cap \eta(p) = \emptyset$ so $G[\eta(p)]$ is a connected component of $G - N_G[V(Q)]$, which implies $\wei(\eta(p)) \leq \wei(V)/2$.
    Since every nontrivial particle is contained in a full edge particle we can assume without loss of generality that $A$ is a full edge particle: $A = A_{pq}^{pq}$ for some edge $pq \in E(H)$.
    Applying \cref{lem:gyarfas-path-inside-edges} to the induced path $Q_1$ in $G'$, we obtain that $V(Q_1) \subseteq \bigcup_{e \in E(H)} \eta(e)$.

    \begin{claim}\label{cl:1}
        $V(Q_1) \cap \eta(pq) = \emptyset$.
    \end{claim}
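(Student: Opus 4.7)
The plan is to assume, for contradiction, that $V(Q_1) \cap \eta(pq) \neq \emptyset$, and derive it by establishing both that (i) no vertex of $V(Q_2)$ lies in $N_G[A]$, and that (ii) the minimality of the Gyárfás path $Q$ forces $\ell \in V(Q_2)$ to have a $G$-neighbor in $A$. The starting point is to apply \cref{cor:path-inside-edges-internal} to the induced path $Q_1$, whose endpoints $x, y$ are peripheral in $(H, \eta)$, obtaining vertices $c_p \in V(Q_1) \cap \eta(pq, p)$ and $c_q \in V(Q_1) \cap \eta(pq, q)$.

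For (i), a short case analysis of the three allowed edge patterns in the definition of an \esd yields
\[
N_{G'}(A) - A \;\subseteq\; \bigcup_{r \neq q,\; pr \in E(H)} \eta(pr, p) \;\cup\; \bigcup_{s \neq p,\; qs \in E(H)} \eta(qs, q).
\]
Property 2 of extended strip decompositions makes $c_p$ $G'$-complete (hence $G$-complete, since $E(G') \subseteq E(G)$) to every such $\eta(pr, p)$, and symmetrically $c_q$ to every such $\eta(qs, q)$. Since $Q$ is induced and every $V(Q_2)$-vertex is separated from $c_p, c_q \in V(Q_1)$ in $Q$ by at least the removed vertex $z'$, no $V(Q_2)$-vertex can be $G$-adjacent to $c_p$ or $c_q$; hence $V(Q_2) \cap (N_{G'}(A) - A) = \emptyset$. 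To see moreover that $V(Q_2) \cap A = \emptyset$, first note $z \notin A$: otherwise the disjointness of the $\eta$-sets forces $e_z = pq$, so $v_z \in \{p, q\}$ and $\eta(pq, v_z) = \{z\}$, making $c_p = z$ or $c_q = z$, contradicting $V(Q_1) \cap V(Q_2) = \emptyset$. Then, if $V(Q_2)$ entered $A$ at some point, the first such vertex would have a $Q_2$-predecessor in $V(Q_2) \cap (N_{G'}(A) - A)$, which is empty. Since $A \subseteq V(G')$, $G$-adjacencies from $V(Q_2)$ into $A$ coincide with $G'$-adjacencies, so $V(Q_2) \cap N_G[A] = \emptyset$, establishing (i).

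For (ii), I will invoke the minimality of $Q$: removing $\ell$ breaks the Gyárfás property, so some connected component $C$ of $G - N[V(Q) - \ell]$ has $\wei(V(C)) > \wei(V)/2$, and $\ell$ must have a $G$-neighbor in $V(C)$ (since some vertex of $V(C)$ lies in $N[\ell]$ but $\ell \notin V(C)$). Weight comparison forces $V(C) \cap A \neq \emptyset$, and $V(C) \subseteq V(G')$ follows from $Y \subseteq V(Q) - \ell$. The crucial claim is that $V(C) \subseteq A$: otherwise the connectivity of $V(C)$ in $G$ produces an edge $v_1 v_2$ with $v_1 \in V(C) - A$ and $v_2 \in V(C) \cap A$; both are in $V(G')$, so $v_1 \in N_{G'}(A) - A$ lies in some $\eta(pr, p)$ or $\eta(qs, q)$, and completeness makes $v_1$ $G$-adjacent to $c_p$ or $c_q \in V(Q) - \ell$, contradicting $V(C) \cap N(V(Q) - \ell) = \emptyset$. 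Hence $\ell$ has a $G$-neighbor in $V(C) \subseteq A$, contradicting (i). The main technical obstacle is the case analysis characterizing $N_{G'}(A) - A$; once that is in place, the completeness axiom does the rest of the work twice, first to keep $V(Q_2)$ away from $A$ and then to trap $V(C)$ inside $A$.
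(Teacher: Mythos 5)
Your proposal is correct and follows essentially the same route as the paper's proof: both obtain the two interface vertices $v_p\in\eta(pq,p)$, $v_q\in\eta(pq,q)$ on $Q_1$ from \cref{cor:path-inside-edges-internal}, use the fact that any vertex entering $N(A)$ must lie in an interface $\eta(f,r)$ with $f\neq pq$ and hence be adjacent to $v_p$ or $v_q$ (contradicting that $Q$ is induced) to show $V(Q_2)\cap N[A]=\emptyset$, and then use the minimality of the Gy\'{a}rf\'{a}s path to produce a heavy component $C$ of $G-N[V(Q)-\ell]$ that is forced into $A$ while $\ell$ has a neighbor in it. The only difference is that you unfold the characterization of $N_{G'}(A)$ directly from the \esd axioms where the paper cites \cref{lem:enter-particle} and \cref{lem:small-dom-ngbhd}; the argument is sound either way.
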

    \begin{proof}
    By contradiction suppose that $V(Q_1) \cap \eta(pq) \neq \emptyset$.
    
    By \cref{cor:path-inside-edges-internal}, $G'[V(Q_1) \cap \eta(pq)]$ is a path with endpoints $v_p \in \eta(pq, p)$ and $v_q \in \eta(pq, q)$.
    Therefore, $\eta(pq, p) \neq \{z\}$ and $\eta(pq, q) \neq \{z\}$ so $z \notin \eta(pq)$.
    Applying \cref{lem:enter-particle} to the induced path $Q_2$ with endpoint $z$ in $G'$, we get that if $V(Q_2) \cap N_{G'}[A] \neq \emptyset$, there exists an edge $f \neq pq$, $r \in \{p, q\}$ and $z'' \in \eta(f, r) \cap V(Q_2)$.
    However, we would then have $z''v_r \in E(G)$ with $z'' \in V(Q_2)$ and $v_r \in V(Q_1)$, contradicting that $Q$ is an induced path in $G$.
    Therefore, $V(Q_2) \cap N_{G'}[A] = \emptyset$.
    
    By minimality of $Q$, $G - N_G[V(Q) - \{\ell\}]$ has a connected component $C$ such that $\wei(V(C)) > \wei(V)/2$.
    Therefore, $\ell$ has a neighbor in $V(C)$, and $V(C) \cap A \neq \emptyset$ since $\wei(V(C)), \wei(A) > \wei(V)/2$.
    Observe also that $C \subseteq V(G) - N_G[V(Q) \setminus \{\ell\}] \subseteq V(G')$
    If $u \in V(C) \cap N_{G'}(A)$ then there exists $f \neq pq \in E(H)$ and $r \in \{p, q\}$ such that $u \in \eta(f, r)$. However, this implies that $uv_r \in E(G') \subseteq E(G)$ and therefore $u \in N_G[V(Q_1)]$, which is impossible since $V(C) \cap N_G[V(Q_1)] = \emptyset$.
    Thus, $V(C) \cap N_{G'}(A) = \emptyset$, and therefore $V(C) \cap A \neq \emptyset$ implies $V(C) \subseteq A$ since $C$ is connected in $G'$.
    Hence, $\ell$ has a neighbor in $A$, which contradicts $V(Q_2) \cap N_{G'}[A] = \emptyset$.

    This finishes the proof of the claim.
    \end{proof}
    
    From Claim~\ref{cl:1} and~\cref{lem:gyarfas-path-inside-edges} we infer that $V(Q_1) \cap A = \emptyset$ as $x$ and $y$ are peripheral.
    Let $X = N_{G'}[A] \cap V(Q_1)$.
    If $u \in X$, there is a vertex $a \in A$ such that $ua \in E(G)$, and thus there exists $r \in \{p, q\}$ and an edge $f$ incident to $r$ such that $u \in \eta(f, r)$.
    However, by \cref{lem:gyarfas-path-inside-edges}, we have $|V(Q_1) \cap \eta(ab, a)| \leq 1$ for every edge $ab \in E(H)$.
    Furthermore, if $u_1 \in \eta(f_1, r), u_2 \in \eta(f_2, r), u_3 \in \eta(f_3, r)$ for some pairwise distinct edges $f_1, f_2, f_3$ then $G[\{u_1, u_2, u_3\}]$ induces a triangle.
    Since $Q_1$ is an induced path, it follows that $|X| \leq 4$.
    By \cref{lem:small-dom-ngbhd}, there exists a set $X_A$ of size at most 2 such that $N_{G'}(A) \subseteq N_{G'}(X_A)$.
    \begin{claim}\label{cl:2}
    Every connected component of $G'' := G - N_G[Y \cup X \cup X_A \cup \{\ell, z'\}]$ has weight at most $\wei(V)/2$.
    \end{claim}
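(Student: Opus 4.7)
\medskip

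\noindent\textbf{Proof plan.} My plan is to take an arbitrary connected component $C$ of $G''$ and argue that $\wei(V(C))\le\wei(V)/2$ by splitting on whether $V(C)$ meets the heavy particle $A$. The case $V(C)\cap A=\emptyset$ is immediate since $\wei(V(G)\setminus A)<\wei(V)/2$, so I focus on $V(C)\cap A\neq\emptyset$. For this case the strategy is to prove two successive containments, $V(C)\subseteq A$ and then $V(C)\subseteq V(G)\setminus N_G[V(Q)]$, and close the argument with a single appeal to the Gy\'arf\'as property from~\cref{thm:gyarfas}: a connected subset of $V(G)\setminus N_G[V(Q)]$ lies inside one connected component of $G-N_G[V(Q)]$, whose weight is at most $\wei(V)/2$ by the choice of $Q$.

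For the first containment I will show that no edge of $G''$ leaves $A$, by proving $N_G(A)\cap V(G'')=\emptyset$. The idea is to split on whether a given $v\in N_G(A)$ lies in $V(G')$. If $v\in V(G')$, then $v\in N_{G'}(A)$, and the rigid \esd $(H,\eta)$ of $G'$ together with \cref{lem:small-dom-ngbhd} gives a dominator $X_A\subseteq V(G')$ with $|X_A|\le 2$ and $N_{G'}(A)\subseteq N_{G'}(X_A)$, hence $v\in N_G[X_A]$ is removed in $G''$. If $v\notin V(G')$, then by construction of $G'$ we have $v\in N_G(Y)\subseteq N_G[Y]$, which is again removed in $G''$. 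Combined with $V(C)\cap A\neq\emptyset$ and connectivity of $C$ in $G''$, this forces $V(C)\subseteq A$.

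For the second containment I will decompose $V(Q)=V(Q_1)\cup\{z',\ell\}\cup Q_z$, using $V(Q_2)=Q_z\cup\{\ell\}$. The pieces $\{z',\ell\}$ and $Q_z\subseteq Y$ have their closed $G$-neighborhoods removed in $G''$ by construction, so $V(C)$ avoids them automatically. The only subtle piece is $V(Q_1)$, and here I will use the fact (noted in the paper just above \cref{cl:2}, via \cref{cl:1} and \cref{lem:gyarfas-path-inside-edges}) that $V(Q_1)\cap A=\emptyset$, so $V(C)\subseteq A$ is disjoint from $V(Q_1)$. Moreover, if some $u\in V(C)$ had a $G$-neighbor $v\in V(Q_1)$, then $v$ would be a $V(Q_1)$-vertex with a $G$-neighbor in $A$; since both $A$ and $V(Q_1)$ lie in $V(G')$, that edge is also a $G'$-edge, so $v\in N_{G'}[A]\cap V(Q_1)=X$, contradicting $u\in V(G'')\subseteq V(G)\setminus N_G[X]$.

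I expect the main difficulty to be the first containment $V(C)\subseteq A$: it forces me to reason simultaneously in the two ambient graphs $G$ and $G'$, because \cref{lem:small-dom-ngbhd} only guarantees a dominator of $N_{G'}(A)$ rather than of $N_G(A)$, and the complementary observation that every $G$-neighbor of $A$ which fell out of $G'$ must lie in $N_G[Y]$ is essential. The other delicate point is the avoidance of $N_G[V(Q_1)]$, which relies on $X$ having been defined to capture \emph{every} $V(Q_1)$-vertex that touches $A$ in $G$, not just in some sparser subgraph.
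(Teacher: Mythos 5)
Your proposal is correct and follows essentially the same route as the paper's proof: both establish $V(C)\subseteq A$ by showing that every $G$-neighbor of $A$ surviving into $G''$ would lie in $N_G[Y]$ or $N_G[X_A]$, and both then rule out any interaction of $C$ with $N_G[V(Q)]$ via $X$, $Y$, and $\{\ell,z'\}$ before invoking the Gy\'arf\'as property of $Q$. The only differences are organizational (a direct case split on $V(C)\cap A$ and an explicit decomposition of $V(Q)$, versus the paper's argument by contradiction), not substantive.
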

    \begin{proof}
    Consider a connected component $C$ of $G''$ and assume by contradiction that $\wei(V(C)) > \wei(V)/2$. Then, $V(C) \cap A \neq \emptyset$.
    If $v \in V(G'') \cap N_G[A]$ then $v \in V(G')$ and $v \notin N_G[X_A] \supseteq N_{G'}(X_A) \supseteq N_{G'}(A)$. Therefore, $v \in N_{G'}[A] \setminus N_{G'}(A) = A$.
    Since $V(C) \cap A \neq \emptyset$ and $C$ is connected in $G''$ then $V(C) \subseteq A$.
    However, every connected component of $G - N_G[V(Q)]$ has weight at most $\wei(V)/2$. 
    Thus, $C$ is not contained in any connected component of $G - N_G[V(Q)]$ and therefore there exists some vertex $q \in V(Q) \setminus N_G[Y \cup X \cup X_A \cup \{\ell, z'\}]$ which is in $N_G[V(C)]$.
    Therefore, $q \in V(Q_1) \cap N_G[V(C)] \subseteq V(Q_1) \cap N_G[A] = X$, a contradiction.
    This finishes the proof of the claim.
    \end{proof}
    Thanks to Claim~\ref{cl:2}, 
    by setting $\mathcal{S} = Y \cup X \cup X_A \cup \{\ell, z'\}$, we have $|\mathcal{S}| \leq 3t+11$ and $G - N_G[\mathcal{S}]$ has a trivial rigid extended strip decomposition whose every particle has weight at most $\wei(V)/2$. 
    
    Finally, note that this proof is constructive and immediately yields a polynomial time algorithm.
\end{proof}

\bibliography{main}
\end{document}